\documentclass[reqno,a4paper,12pt]{amsart} 

\usepackage{amsmath,amscd,amsfonts,amssymb}
\usepackage{mathrsfs,dsfont}
\usepackage{color}
\usepackage{mathtools}
\usepackage{hyperref}
\usepackage{enumitem}
\usepackage{todonotes}
\usepackage[normalem]{ulem}
\usepackage{subcaption}

\usepackage{tikz}
\usetikzlibrary{decorations.pathreplacing}

\numberwithin{equation}{section}
\numberwithin{figure}{section}

\addtolength{\topmargin}{-1cm}
\addtolength{\textheight}{2cm}
\addtolength{\hoffset}{-1cm}
\addtolength{\textwidth}{1.5cm}

\parskip .06in

\def\R{\mathbb{R}}

\def\Q{\mathbb{Q}}
\def\Z{\mathbb{Z}}
\def\N{\mathbb{N}}

\def\1{\mathds{1}}

\newcommand{\Tile}{\operatorname{Tile}}

\renewcommand\le{\leqslant}

\renewcommand\leq{\leqslant}
\renewcommand\geq{\geqslant}

\theoremstyle{plain}
\newtheorem{thm}{Theorem}[section]
\newtheorem{theorem}{Theorem}[section]

\newtheorem{lemma}[thm]{Lemma}
\newtheorem{corollary}[thm]{Corollary}

\newtheorem*{claim*}{Claim}

\theoremstyle{definition}
\newtheorem{definition}[thm]{Definition}
\newtheorem*{definition*}{Definition}
\newtheorem*{remarks*}{Remarks}
\newtheorem*{remark*}{Remark}
\newtheorem{remark}[thm]{Remark}

\newcommand{\quake}[2][\vec v]{\operatorname{Plates}(#2,#1)}

\begin{document}

\title[Translational tilings by rational polygonal sets]{Periodicity and decidability of translational tilings by rational polygonal sets}

\author[J. de Dios Pont]{Jaume de Dios Pont}
\address{Department of Mathematics, ETHZ, 8001 Zurich.}
\email{jaumed@math.ethz.ch}
     
\author[J. Greb\'ik]{Jan Greb\'ik}
\address{Faculty of Informatics, Masaryk University, Botanick\'{a} 68A, 602 00 Brno, Czech Republic and UCLA Mathematics, 520 Portola Plaza, Los Angeles, CA 90095, USA.}
\email{grebikj@gmail.com}
     
\author[R. Greenfeld]{Rachel Greenfeld}
\address{Department of Mathematics, Northwestern University, Evanston, IL 60208.}
\email{rgreenfeld@northwestern.edu}

\author[J. Madrid]{Jos\'e Madrid}
\address{Department of  Mathematics, Virginia Polytechnic Institute and State University,  225 Stanger Street, Blacksburg, VA 24061-1026, USA.}
\email{josemadrid@vt.edu}

    	\subjclass[03B25, 52C22, 52C23]{03B25, 52C22, 52C23}
    	\date{}
    	
    	\keywords{tiling, periodicity, decidability}

\begin{abstract} 
The periodic tiling conjecture asserts that if a region $\Sigma\subset \mathbb R^d$ tiles $\mathbb R^d$ by translations then  it admits at least one \emph{fully periodic} tiling. This conjecture is known to hold in $\mathbb R$, and recently it was disproved in sufficiently high dimensions. 
In this paper, we study the periodic tiling conjecture for \emph{polygonal sets}: bounded open sets in $\mathbb R^2$ whose boundary is a finite union of line segments. We prove the periodic tiling conjecture for any polygonal tile whose vertices are rational. As a corollary of our argument, we also obtain the decidability of tilings by rational polygonal sets. 
Moreover, we prove that \emph{any} translational tiling by a rational polygonal tile is weakly-periodic, i.e., can be  partitioned into finitely many singly-periodic pieces.  
\end{abstract}

\maketitle


\section{Introduction}

Let $\Sigma \subset \R^d$ be a bounded, measurable set. We say that $\Sigma$ \emph{tiles $\R^d$ by translations} if there exists a countable set $T\subset \R^d$ such that the translations of $\Sigma$ along the points of $T$, $\Sigma + t$, $t\in T$, cover almost every point in $\R^d$ exactly once. In this case, we write $\Sigma \oplus T=\R^d$ and refer to $\Sigma$ as a \emph{translational tile} and to $T$ as a \emph{tiling of $\R^d$ by $\Sigma$}. 
Let $\Tile(\Sigma; \R^d)$ denote the set of all the tilings of $\R^d$ by $\Sigma$, i.e., $$\Tile(\Sigma; \R^d)=\{T\subset \R^d\colon \Sigma\oplus T=\R^d\}.$$
Since in this work we consider tilings by \emph{only} translations, in what follows we sometime abbreviate and write ``tiling'' to refer to  tiling by translations and ``tile'' to refer to  translational tile.

The study of translational tiling consists of studying the structure of sets in $\Tile(\Sigma; \R^d)$. A major conjecture in this area is \emph{the periodic tiling conjecture} \cite{stein, grunbaum-shephard, LW}, which asserts that if $\Tile(\Sigma; \R^d)$ is non-empty, then it must contain at least one \emph{periodic} set, i.e., a set $T\in \Tile(\Sigma; \R^d)$, which is invariant under translations by  lattice $\Lambda\subset \R^d$ (here, a lattice is a full rank discrete subgroup).

The periodic tiling conjecture (PTC) is known to hold in $\R$ \cite{LW}, for convex domains in all dimensions \cite{V, M} and for topological disks in $\R^2$ \cite{gbn, ken, err}. On the other hand, recently, the periodic tiling conjecture was disproved in sufficiently high dimensions \cite{GT22}, even under the assumption that the tile is connected \cite{GK}. 

One important motivation to study the structure of tilings and the periodic tiling conjecture in particular is the connection to the \emph{decidability} of tilings. Indeed, in \cite{wang} it was shown that if the periodic tiling conjecture were true, then  the question of whether a set is a tile or not would be decidable.
Recently, in \cite{GT23}, it was shown that translational tilings are \emph{undecidable} if the dimension is unbounded. However, the decidability of translational tilings (by a single tile) in $\R^d$, for a fixed $d\geq 2$, is still open. In this paper we study the periodicity and decidability of planar translational tilings by \emph{polygonal sets}.

\subsection{Polygonal tiles} 
A \emph{polygonal set} is a bounded open set $\Omega \subset \mathbb R^2$, whose boundary is a finite union of segments. Note that a polygonal set can certainly be disconnected and its connected components are not necessarily simply-connected.  We denote the  \emph{vertices} of $\Omega$ by $V(\Omega)$, and the \emph{edges} of  $\Omega$ by $E(\Omega)$. See Figure  \ref{fig:poly2} for illustration.

\begin{figure}[!h]
    \includegraphics[width = .8\textwidth]{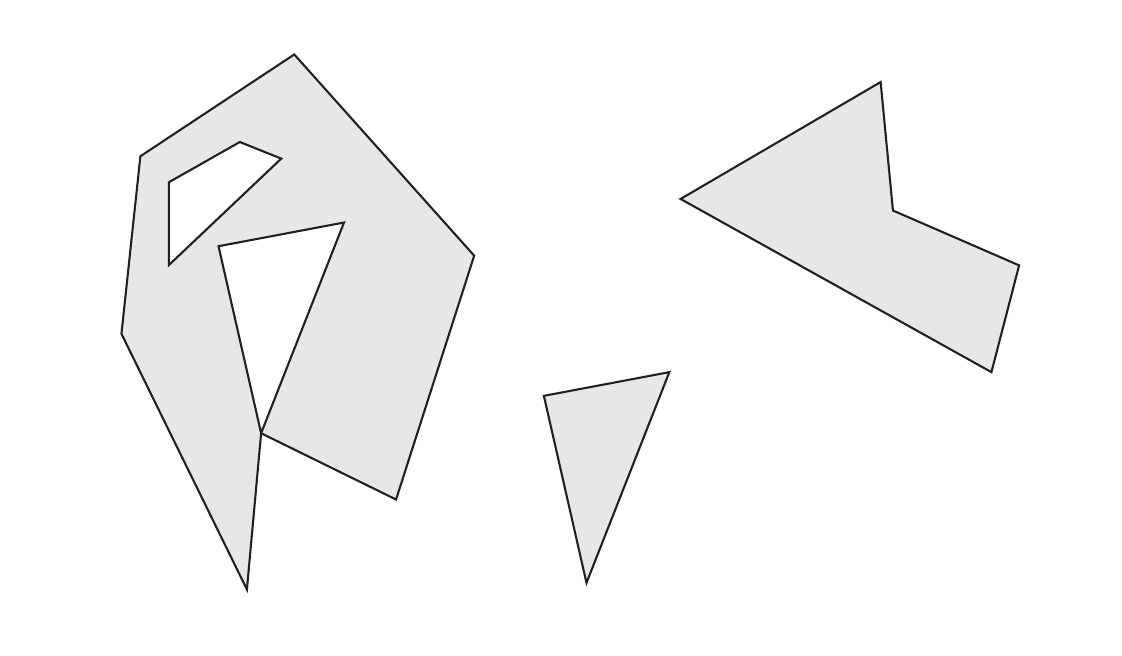}
    \caption{Example of a polygonal set $\Omega$. The set $\Omega$ is shown in gray; it has three connected components. The set $E(\Omega)$ consists of the $22$ segments in black whose union is the boundary of $\Omega$, $\partial \Omega$, and $V(\Omega)$ is the $21$ endpoints of those segments.}
    \label{fig:poly2}
\end{figure}

We say that a polygonal set $\Omega$ is \emph{rational} if  the set $V(\Omega)-v$ is contained in $\Q^2$, for any $v\in V(\Omega)$.

Our first result is that the periodic tiling conjecture holds for rational polygonal tiles.

\begin{theorem}[PTC holds for rational polygonal tiles]\label{thm:ptc}
    A rational polygonal set $\Omega \subset \mathbb R^2$  tiles $\mathbb R^2$ by translations if and only if $\Tile(\Omega; \R^2)$ contains a periodic set. 
\end{theorem}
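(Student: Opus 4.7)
My plan is to reduce the statement to a discrete tiling problem on $\Z^2$ to which Bhattacharya's theorem (the periodic tiling conjecture for single tiles in $\Z^2$) applies. The reduction splits into three stages.

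\emph{Stage 1: Rational rigidity of every tiling.} First I would show that there is an integer $N = N(\Omega)$ depending only on $\Omega$ such that, for every $T \in \Tile(\Omega; \R^2)$, the difference set $T - T$ is contained in $\frac{1}{N}\Z^2$. The hypothesis that the \emph{vertices} of $\Omega$ are rational (and not merely the edge slopes) forces any two adjacent translates $\Omega + t_1$ and $\Omega + t_2$ to satisfy $t_1 - t_2 \in \frac{1}{N}\Z^2$, once $N$ is chosen as a common denominator of the vertex coordinates of $\Omega$; matching the endpoints of the shared boundary segment (or the multi-tile junctions at its endpoints) against vertices of neighbouring tiles pins down the sliding freedom along the shared edge. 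Since the adjacency graph of any tiling is connected (any two tiles are linked by the chain of tiles crossed by a straight segment joining them), this local rigidity propagates globally, and after translating so that $0 \in T$ we get $T \subset \frac{1}{N}\Z^2$.

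\emph{Stage 2: Conversion to a $\Z^2$-tiling problem.} After rescaling by $N$, I may assume $T \subset \Z^2$ and $V(\Omega)$ lies in a translate of $\Z^2$. For each $y \in [0,1)^2$, define the finite set
\[
F_y := \{ m \in \Z^2 : y + m \in \Omega \}.
\]
The map $y \mapsto F_y$ is piecewise constant on the cells of the edge arrangement of $\Omega + \Z^2$ inside the unit square, so it takes only finitely many values $F_1,\ldots,F_k$. Restricting the identity $\1_\Omega * \1_T = \1_{\R^2}$ to each coset $y + \Z^2$ yields $\1_{F_y} * \1_T = \1_{\Z^2}$; equivalently, $T$ simultaneously tiles $\Z^2$ by each $F_i$. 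Conversely, any $T' \subset \Z^2$ that tiles $\Z^2$ by every $F_i$ satisfies $T' \in \Tile(\Omega; \R^2)$.

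\emph{Stage 3: Producing a periodic tiling.} Bhattacharya's theorem applied to $F_1$ yields a periodic $T^\star \subset \Z^2$ with $F_1 \oplus T^\star = \Z^2$. To finish, I would argue that one can choose such a $T^\star$ that is \emph{simultaneously} a tiling of $\Z^2$ by every $F_i$. A natural route is to combine Bhattacharya's result with a structure description of the space of all tilings of $\Z^2$ by $F_1$ (in the spirit of the $\Z^2$-tiling analysis of Greenfeld and Tao), and then to extract from the orbit closure of the original $T$ a periodic element, whose compatibility with every $F_i$ is automatic since it is inherited from $T$.

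\textbf{Main obstacle.} The hardest step is Stage 1: uniformly controlling the denominators of $T$. Local edge-matching alone pins down $t_1 - t_2$ only modulo translation along the shared edge direction, so one must exploit the global vertex structure -- where edges of several distinct rational directions meet -- to eliminate this sliding freedom and trap $T - T$ in a fixed lattice. This is where rationality of $V(\Omega)$, rather than merely of the slope set, is essential, and where I expect most of the technical work in the proof of \thmref{thm:ptc} to concentrate. Stage~3 is also delicate, since Bhattacharya's theorem by itself yields periodicity for one $F_i$ at a time; reconciling the finitely many tiling conditions requires additional structural information, which is precisely what the weak-periodicity statement advertised in the abstract provides.
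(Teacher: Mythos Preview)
Your Stage~1 claim is false as stated, and this is not merely a technical obstacle but a genuine counterexample. Take $\Omega=(0,1)^2$: its vertices are integers, yet the tiling $T=\bigcup_{m\in\Z}\bigl((m,0)+\{(\alpha_m,0)\}+\Z\times\{0\}\bigr)$ with arbitrary real offsets $\alpha_m$ lies in $\Tile(\Omega;\R^2)$, and $T-T$ is not contained in any lattice. Your proposed mechanism---that multi-tile junctions at segment endpoints pin down the sliding---fails here because a vertex of a tile in one row can land in the relative interior of an edge of a tile in the adjacent row; no vertex of the other row is forced to meet it. More generally, whenever $\Omega$ admits a tiling with more than one $\sim_{\Omega,T}$ vertex-sharing class (as in \lemref{lem:tilings_slide}), the classes slide independently and your uniform denominator bound cannot hold.

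The paper proceeds quite differently. Rather than proving rigidity of \emph{every} tiling, it shows (\lemref{lem:integertiling}) that one can always \emph{manufacture} an integer tiling: the $\sim_{\Omega,T}$-components of any given tiling are strips invariant in a common direction $\vec v$, and by sliding each strip by a suitable amount one merges them into a single vertex-sharing class, which then sits in $\Z^2$. For the discretization step the paper also avoids your simultaneous system $\{F_1,\dots,F_k\}$: instead it builds a \emph{single} finite set $F\subset N^{-1}\Z^2$ (via the gadget sets $S_0,\dots,S_{M-1}$ in \secref{sec:discrete}) with the property that integer tilings of $\R^2$ by $\Omega$ coincide exactly with tilings of $N^{-1}\Z^2$ by $F$. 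The $\Z^2$ periodic tiling theorem then applies directly to this one $F$, sidestepping entirely the difficulty you flag in Stage~3 of reconciling periodicity across several $F_i$.
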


Using the proof of Theorem~\ref{thm:ptc} we also obtain the decidability of tilings in $\R^2$ by rational polygonal tiles.

\begin{corollary}\label{cor:decide}
    There is an algorithm that computes, upon any given rational polygonal set $\Omega$, whether $\Tile(\Omega;\R^2)$ is empty or not.
\end{corollary}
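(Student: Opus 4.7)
My plan is to deduce Corollary~\ref{cor:decide} from Theorem~\ref{thm:ptc} via a Wang-style parallel search, with rationality of $\Om$ ensuring effectivity of the enumerations. I run two semi-algorithms in parallel: one searches for a periodic tiling, the other for a finite obstruction to tileability; Theorem~\ref{thm:ptc} guarantees that exactly one of them halts.

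The ``yes''-semi-algorithm enumerates candidate periodic tilings as pairs $(F,\Lam)$, where $\Lam\subset\R^2$ is a rational lattice (i.e., admitting a $\Q$-basis) and $F\subset\Q^2$ is a finite set of coset representatives modulo $\Lam$; for each candidate it checks whether $\Om\oplus(F\oplus \Lam)=\R^2$. This check is finite: it reduces to verifying, inside one fundamental domain of $\Lam$, that the translates $\Om+f$ for $f\in F$ are pairwise essentially disjoint and cover the domain up to measure zero, which amounts to finitely many Boolean operations on rational polygons. To guarantee termination when $\Om$ tiles, I extract from the proof of Theorem~\ref{thm:ptc} the quantitative statement that a rational polygonal tile, if it tiles at all, admits a periodic tiling with both $\Lam$ and $F$ rational; such a tiling is then found after finitely many enumeration steps.

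The ``no''-semi-algorithm enumerates integers $R\ge 1$ and, for each $R$, performs a finite search for a collection of essentially-disjoint translates of $\Om$, drawn from a discrete candidate set of translations (inside $B(0,R)+\diam(\Om)$ and $\frac{1}{N}\Z^2$ for an effectively computable $N$), whose union covers $B(0,R)$. If for some $R$ no such partial tiling exists, the algorithm halts with ``not tileable''. Its correctness is a standard König's-lemma compactness: if $\Om$ failed to tile $\R^2$ yet every $B(0,R)$ admitted a finite covering by translates, one could extract a tiling of all of $\R^2$ by a diagonal argument, contradicting the hypothesis.

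The main obstacle is the effectivity of the two enumerations, which rests on two quantitative inputs that must be read off from the proof of Theorem~\ref{thm:ptc} rather than just its statement: (i) a \emph{local rigidity} bound, ensuring that every finite partial tiling of $B(0,R)$ by $\Om$ can be realized with translations in $\frac{1}{N}\Z^2$ for an $N$ computable from $V(\Om)$, which makes the ``no''-semi-algorithm's per-$R$ search finite; and (ii) a \emph{rational periodicity} statement, that whenever $\Om\oplus T=\R^2$ has a solution one can take $T$ to be periodic with rational period lattice and rational offsets. Both should be by-products of the rigidity analysis underlying Theorem~\ref{thm:ptc}; the only work is to track the data carefully enough to produce explicit $N$ and explicit bounds on the denominators and determinants of $(F,\Lam)$, rather than to merely assert their existence.
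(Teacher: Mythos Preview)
Your parallel-search skeleton is correct and matches the paper's high-level strategy, but the implementations differ: the paper does not run the search on $\Omega$ directly. Instead it first invokes the discretization \lemref{lem:raster} to compute a finite $F=F(\Omega)\subset N^{-1}\Z^2$ with $\Tile(\Omega;\R^2)=\emptyset\iff\Tile(F;N^{-1}\Z^2)=\emptyset$, and then runs the standard patch-growing argument for $F$ inside the lattice $N^{-1}\Z^2$. This buys a clean reduction in which both the ``yes'' and ``no'' searches are automatically finite, with no separate rationality bookkeeping on polygons; your route avoids the discretization lemma but must then extract the effectivity inputs (i)--(ii) by hand from the proof of \thmref{thm:ptc}.

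One point to repair: your input (i), that \emph{every} finite partial tiling of $B(0,R)$ can be realized in $\tfrac{1}{N}\Z^2$, is stronger than what the paper establishes and stronger than you need. The sliding argument of Lemmas~\ref{lem:tilings_slide}--\ref{lem:onecomp} is written for global tilings of $\R^2$, where each $\sim_{\Omega,T}$-class occupies a full $\vec v\R$-invariant strip; for a partial tiling of a ball, the boundary of a class can have proper vertices on the edge of the covered region, and that argument does not directly apply. All you actually need to prevent false negatives in the ``no''-semi-algorithm is the weaker statement that \emph{if} $\Omega$ tiles $\R^2$ then it admits a tiling by $\tfrac{1}{N}\Z^2$-translates --- and this is exactly \lemref{lem:integertiling} after rescaling $V(\Omega)$ into $\Z^2$. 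With that substitution (and the analogous appeal to the proof of \thmref{thm:ptc} for the rationality of the periodic tiling on the ``yes'' side) your argument goes through.
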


In addition, we establish a structural result that applies to \emph{any} set in $\Tile(\Omega; \R^2)$. To state our result, we first introduce some refinements of the notion of periodicity.

\begin{definition}[Single-periodicity, weak-periodicity, double-periodicity] Let $S\subset \R^2$ be a countable set.
\begin{itemize}
    \item[i)] We say that $S$ is  \emph{singly-periodic} (or $\langle h\rangle$-periodic) if it is invariant under translations by some non-zero vector $h \in \R^2\setminus \{0\}$.
    \item[ii)] We say that $S$ is \emph{weakly-periodic}  if it can be partitioned into finitely many  singly-periodic sets.
    \item[iii)] We say that $S$ is \emph{double-periodic} (or $\langle h_1,h_2\rangle$-periodic) if it is periodic, i.e., invariant under translations by two linearly independent vectors $h_1,h_2 \in \R^2$.
\end{itemize}
\end{definition}

We show that any tiling by a  rational polygonal tile must be weakly-periodic.

\begin{theorem}[Structure of tilings by rational polygonal sets]\label{thm:weakper}
    Let $\Omega \subset \mathbb R^2$ be a rational polygonal set. Then any $T\in \Tile(\Omega;\R^2)$  is weakly-periodic. 
\end{theorem}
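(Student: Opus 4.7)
The plan is to reduce the claim to Bhattacharya's weak-periodicity theorem for tilings of $\Z^2$ by a finite set, via a discretization argument that exploits the rationality of $\Omega$. The first stage is to prove a lattice containment $T - T \subset \Lambda$ for a rational lattice $\Lambda \subset \R^2$ depending only on $\Omega$. After rescaling so that $V(\Omega) \subset \Z^2$ (possible by rationality), the natural choice is
$$\Lambda = \{x \in \R^2 : n \cdot x \in \Z \text{ for every primitive integer normal } n \text{ to an edge of } \Omega\},$$
which is a genuine lattice because $\Omega$ has edges in at least two independent directions. For a fixed $n$, the one-step observation is that whenever two tiles $\Omega + s, \Omega + s' \in T$ share a segment of boundary contained in a line with normal $n$, matching the two translated line equations immediately gives $n \cdot (s - s') \in \Z$. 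To extend this to arbitrary pairs, I would walk from $\Omega + t$ to $\Omega + t'$ across a chain of adjacent tiles, decomposing $n \cdot (t - t')$ into integer increments either by direct edge-matching (when the crossed edge has normal $n$), or by a \emph{strip} argument using that, between two consecutive lines of the form $\{x : n \cdot x = c\}$ carrying tile-edges with normal $n$, the $n$-coordinates of all tiles spanning the enclosed strip must lie in a single coset of $\Z$.

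Once $T - T \subset \Lambda$ is established, a linear change of variables reduces to $\Lambda = \Z^2$ with $V(\Omega) \subset \Z^2$. I would then decompose $\Omega = \bigsqcup_{k \in \Z^2} \Omega_k$ with $\Omega_k = \Omega \cap (k + [0,1)^2)$, and set $\widetilde \Omega_k = \Omega_k - k \subset [0,1)^2$. For almost every $y \in [0,1)^2$, the set $A(y) := \{k \in \Z^2 : y \in \widetilde \Omega_k\}$ is a finite nonempty subset of $\Z^2$, and the continuous identity $\Omega \oplus T = \R^2$ translates directly into the discrete tiling identity $A(y) \oplus T = \Z^2$. Applying Bhattacharya's theorem for tilings of $\Z^2$ by a finite set to $T$ yields a decomposition $T = \bigsqcup_{i=1}^M T_i$ into singly-periodic subsets of $\Z^2$; these remain singly-periodic when viewed in $\R^2$, and the rescaling together with the change of variables preserves weak-periodicity, which would complete the proof.

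The main technical hurdle lies in the first stage. While the one-step constraint $n \cdot (s - s') \in \Z$ for edge-adjacent tiles is immediate from the line equations, promoting it to the global statement $n \cdot (t - t') \in \Z$ for all $t, t' \in T$ and all edge normals $n$ requires care. The subtleties I expect to grapple with are: (i) handling T-junctions, where a vertex of one tile lies in the interior of an edge of a neighbor---the constraint still holds because the vertex lies in a translate of $\Z^2$, but this needs explicit verification; (ii) establishing that the tile-adjacency graph (or the auxiliary ``share a horizontal line'' graph) is connected, so that a path from $t$ to $t'$ actually exists; and (iii) justifying the strip argument by verifying that, for each edge normal $n$, the lines with normal $n$ hosting tile-edges form a locally finite discrete family partitioning $\R^2$ into the relevant strips.
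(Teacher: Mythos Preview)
Your first stage is false as stated, and the counterexample is the most basic one: take $\Omega=[0,1)^2$, so the primitive edge normals are $(1,0)$ and $(0,1)$ and your lattice is $\Lambda=\Z^2$. For any sequence $(\alpha_m)_{m\in\Z}$ of reals, the set $T=\{(m,n+\alpha_m):m,n\in\Z\}$ lies in $\Tile(\Omega;\R^2)$, yet $(0,\alpha_1-\alpha_0)\in T-T$ need not be in $\Z^2$. Your one-step constraint $n\cdot(s-s')\in\Z$ is correct when the shared boundary segment has normal $n$, but the extension to arbitrary pairs fails precisely at your step~(iii): in this example the horizontal edges sit on the lines $\{y=c\}$ with $c\in\bigcup_m(\alpha_m+\Z)$, which is typically dense in $\R$, so there are no ``strips'' between consecutive such lines. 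More generally, there is no globally valid relation constraining $n\cdot(t-t')$ when you cross an edge whose normal is orthogonal to $n$.

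This sliding phenomenon is exactly the obstruction the paper is built around. The paper does \emph{not} claim $T-T$ lies in a lattice; instead it shows (Lemma~\ref{lem:tilings_slide}) that $T$ decomposes into vertex-sharing components $Q_i$, each of which has $\Omega\oplus Q_i$ invariant under a common line $\vec v\,\R$, and that by sliding each $Q_i$ along $\vec v$ one obtains an integer tiling $T'\subset\Z^2$ (Lemmas~\ref{lem:onecomp} and~\ref{lem:tile+vertex=>tiling}). Your discretization idea then applies to $T'$, but this only gives weak periodicity of $T'$, not of $T$. The passage back to $T$ is the real work: one must show that the singly-periodic decomposition of $T'$ guaranteed by \cite{GT} can be chosen compatibly with the sliding decomposition into the $Q_i$'s, so that each $Q_i$ is itself weakly periodic with periods drawn from a finite list independent of $i$. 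This is the content of the earthquake-decomposition Lemma~\ref{lem:EarthquakeStructure}, and it is the step your proposal is missing entirely.
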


We will address the periodicity and decidability of polygonal sets with irrational vertices in a subsequent work.

\subsection{Outline of our arguments}
Let $\Omega \subset \mathbb R^2$ be a rational polygonal set. By translation and dilation invariance of tilings, we can assume that $V(\Omega)\subset \Z^2$. 

\begin{itemize}
    \item[(1)] In Section~\ref{sec:integer}, by sliding the \emph{sliding components} of a tiling by $\Omega$, we show that there exists a tiling $T$ of $\R^2$ by $\Omega$ that is contained in $\Z^2$.
    \item[(2)]  In Section~\ref{sec:discrete}, we \emph{discretize} $\Omega$: We show that there is a natural number $N$ and a finite set $F\subset N^{-1}\Z^2$ such that every $0\in T\subset \R^2$ is a tiling of $N^{-1}\Z^2$ by $F$ if and only if it is an integer tiling of $\R^2$ by $\Omega$.
    \item[(3)] Then, using \cite{BH, GT}, where the periodic tiling conjecture was proved to hold in $\Z^2$, we show the existence of a periodic $T\subset \Z^2$ which is a tiling of $N^{-1}\Z^2$ by $F$. Theorem~\ref{thm:ptc} and Corollary~\ref{cor:decide} then follow from combining this with (1) and (2). 
    \item[(4)]  While the proof of Theorem~\ref{thm:ptc} uses  off-the-shelf results from \cite{BH, GT}, the proof of Theorem~\ref{thm:weakper} is more delicate and requires novel refinement of the technology introduced in \cite{GT}. 
    Indeed, by combining (2) and \cite[Theorem 1.4]{GT}, we merely obtain that any \emph{integer} tiling in $\Tile(\Omega;\R^2)$ is weakly periodic.
     The main obstacle to transferring Theorem~\ref{thm:weakper} from its integer counterpart is the possible \emph{sliding} components obstructing periodicity.  
  In Section~\ref{sec:weakper}, we address this obstacle by analysing the connection between the sliding components of \emph{any} set in $\Tile(\Omega; \R^2)$ and the singly-periodic components of sets in $\Tile(F; N^{-1}\Z^2)$, and  conclude Theorem~\ref{thm:weakper}.  
\end{itemize}

\subsection{Notation}

Let $G=(G,+)$ be an Abelian group.
For $A\subset G$, $g\in G$ we use the notation $A+ g$ for the set $\{a + g\colon a\in A\}$; the set $A-g$ is defined similarly. For $A, B\subset G$, we write $$A+B\coloneqq \{a+b\colon a\in A; b\in B\};$$ the set $A-B$ is defined similarly.

For $\Sigma\subset \R^d$, we denote the  closure of $\Sigma$ by $\overline{\Sigma}$ and the boundary of $\Sigma$ by $\partial \Sigma$.

\subsection{Acknowledgements}
We are grateful to Terence Tao and the anonymous referee for their helpful suggestions, which improved the exposition of this paper.

JD was partially supported by a UCLA Dissertation Year Fellowship award and the Simons Collaborations in MPS grant 563916.  JG was supported by MSCA Postdoctoral Fellowships 2022 HORIZON-MSCA-2022-PF-01-01 project BORCA grant agreement number 101105722.
RG was supported by the Association of Members of the Institute for Advanced Study (AMIAS) and by NSF grant DMS-2242871. JM was supported by the AMS Stefan Bergman Fellowship.

\section{From a tiling to an integer tiling}\label{sec:integer}
Let $\Omega \subset \mathbb R^2$ be a rational polygonal set. As tilings and periodicity are both invariant under affine transformations, by translating $\Omega$ to make the origin one of its vertices and  dilating by the least common multiple of the denominators of both coordinated of all the vertices  we can assume without loss of generality that the vertices of $\Omega$ are in $\mathbb Z^2$. In other words, we may assume that $\Omega$ is an \emph{integer} set. If  $\Omega$ tiles we say that it is an \emph{integer tile}.

\begin{definition}[Integer tiling]
    A tiling $T\in\Tile(\Omega;\R^2)$ is called an \emph{integer tiling}  if $V(\Omega)+T$ is entirely contained in $\Z^2$.
\end{definition}

The goal of this section is showing that any tiling of $\R^2$ by  $\Omega$ gives rise to an integer tiling $T'$ of $\R^2$ by $\Omega$, as stated in the following lemma.

\begin{lemma}[Existence of integer tiling]\label{lem:integertiling}
    Let $\Omega\subset \R^2$ be an integer polygonal tile. Then there exists  $T \in\Tile(\Omega ; \R^2)$ such that $T\subset \Z^2$.
\end{lemma}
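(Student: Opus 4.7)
The plan is to start from an arbitrary $T\in\Tile(\Omega;\R^2)$ and construct the desired integer tiling $T'\subset\Z^2$ by a finite sequence of \emph{slides}: each slide replaces a subset $S\subset T$ by $S+w$ for some $w\in\R^2$, while leaving $T\setminus S$ fixed, subject to the constraint that the result is again a tiling. The structural input is the following observation on shared edges: if $t,t'\in T$ satisfy that $(\Omega+t)\cap(\Omega+t')$ contains a segment of positive length, then that segment lies on a line whose direction $u$ is a primitive integer vector (its endpoints lie in $(V(\Omega)+t)\cup(V(\Omega)+t')$, and $V(\Omega)\subset\Z^2$), and matching endpoints across the two tiles gives $t'-t\in\Z^2+\R u$.

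Guided by this, for each edge direction $u$ of $\Omega$ I define the $u$-sliding components of $T$ as the connected components of the graph on $T$ whose edges connect tiles sharing a boundary segment of direction distinct from $u$. A maximality argument shows that the boundary of $\bigcup_{t\in S}(\Omega+t)$ for such a component $S$ consists only of segments parallel to $u$: any non-$u$ boundary segment would be a shared edge between a tile of $S$ and a tile in its complement, forcing the latter into $S$. Topological reasoning then forces these segments to form complete parallel lines (a closed loop cannot be assembled from parallel segments alone), so the covered region is a union of infinite strips parallel to $u$ --- in particular $\R u$-invariant --- and consequently sliding $S$ by any $w\in\R u$ yields a valid tiling.

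With the slide operation established, I construct $T'$ by iterating over the finitely many edge directions $u_1,\dots,u_k$ of $\Omega$: at round $i$, each $u_i$-sliding component $C$ of the current tiling is translated by some $w_C\in\R u_i$ chosen so as to place a reference tile of $C$ in $\Z^2$; an application of the structural observation along a connecting chain inside $C$ then shows that the remaining discrepancies lie in the rational directions already processed. The main obstacle, which I expect to require the bulk of the proof, is that slides in round $i$ can undo $\Z^2$-alignment achieved in earlier rounds $j<i$: a slide along $\R u_i$ lies in $\Z^2+\R u_i$ but not in $\Z^2$, so it can move a previously-integer tile away from $\Z^2$. The resolution uses the rationality of $\Omega$ essentially: the sub-lattice of $\R u_i$ consisting of slides compatible with all previously achieved alignments is nonempty, with denominators bounded by the integers $|\det(u_j,u_i)|$ for $j<i$, and is fine enough to realize the integer alignment required at round $i$. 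A careful bookkeeping of these denominators along the chains of shared edges produced by the structural observation --- a finite problem because $E(\Omega)$ is finite --- closes the argument and places $T'$ inside the integer sublattice $\Z u_1+\cdots+\Z u_k\subset\Z^2$, as required.
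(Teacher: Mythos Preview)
Your $u$-sliding decomposition and the observation that tiles sharing an edge of direction $u$ differ by an element of $\Z^2 + \R u$ are both correct, and the $\R u$-invariance of each $u$-sliding component does follow. The gap is in the iteration. Your chain argument inside a $u_i$-component $C$ connects tiles through edges of directions $u_j$ with $j\neq i$, so each step contributes a discrepancy in $\R u_j$; summing along the chain you obtain only $t-t' \in \Z^2 + \sum_{j\neq i} \R u_j$. As soon as $\Omega$ has three or more edge directions this is all of $\R^2$, so placing one reference tile of $C$ into $\Z^2$ tells you nothing about the others, and no ``bookkeeping of denominators'' can repair this --- the discrepancies are real, not rational. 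Moreover, a round-$i$ slide translates an entire $u_i$-component rigidly, so it cannot improve the relative position of two tiles inside that component; and two tiles that share only a $u_i$-edge lie in \emph{different} $u_i$-components and may be slid by different amounts, so earlier alignment can be destroyed. There is no monotone quantity that forces the procedure to converge.

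The paper sidesteps the iteration entirely by replacing your edge-sharing relation with \emph{vertex}-sharing: declare $t\approx_\Omega t'$ when $(V(\Omega)+t)\cap(V(\Omega)+t')\neq\emptyset$. The key structural fact is that the region covered by a $\approx_\Omega$-closed component has a boundary with no proper vertices at all, hence that boundary is a union of parallel lines in a \emph{single} direction $\vec v$ --- and this $\vec v$ is the same for every component, since the components partition $\R^2$. One round of slides along $\vec v$ then suffices to merge all components into a single vertex-sharing class; within a single class any two tiles are joined by a chain of shared vertices, and since $V(\Omega)\subset\Z^2$ each step of such a chain gives $t-t'\in\Z^2$ \emph{exactly}, not merely modulo a line. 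The whole tiling thus lands in one coset of $\Z^2$. The missing idea in your proposal is precisely this: vertex-sharing yields a single sliding direction, whereas edge-sharing forces you to juggle all edge directions at once.
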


We will use the vertex-adjacent equivalence relationship:

\begin{definition}[Integer tile, vertex-sharing equivalence class]
Let $\Omega$ be an integer  tile. 
For $t, t'$ in $\mathbb R^2$ we say that $t\approx_\Omega t'$ if $(V(\Omega)+t)\cap (V(\Omega)+t')$ is not empty. 
If $T$ is a tiling of $\mathbb R^2$ by $\Omega$, the relationship $\approx_\Omega$ induces (by transitive closure) an equivalence relationship in $T$. We denote this equivalence relationship by $t\sim_{\Omega} t'$.
\end{definition}

\subsection{Reduction}

The following lemma shows that to prove  Lemma~\ref{lem:integertiling} it suffices to show that every polygonal tile can be repaired to a tile with a unique vertex sharing component.

\begin{lemma}[Vertex-sharing tiling by an integer tile is an integer tiling]
\label{lem:tile+vertex=>tiling}
Let $\Omega$ be an integer tile, so that $T$ is a tiling of  $\mathbb R^2$ by $\Omega$ containing $0$. If $t\sim_{\Omega} t'$ for any elements $t,t'$ of $T$, then $T\subset \mathbb Z^2$.
\end{lemma}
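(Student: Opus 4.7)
The approach is a direct induction on chain length in the vertex-sharing relation, leveraging the integrality of $V(\Omega)$ and the assumption that $0\in T$. First I would establish the base case: for $t=0\in T$ we have $V(\Omega)+t=V(\Omega)\subset\Z^2$ by the assumption that $\Omega$ is an integer tile, so in particular $0\in\Z^2$.

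For the inductive step, suppose $t'\in T$ satisfies $t'\in\Z^2$ and $t\approx_\Omega t'$ for some $t\in T$. By the definition of $\approx_\Omega$ we may pick a point $p\in (V(\Omega)+t)\cap(V(\Omega)+t')$ and write $p=v+t=v'+t'$ with $v,v'\in V(\Omega)$. Since $v,v',t'$ all lie in $\Z^2$, the identity $t=t'+v'-v$ gives $t\in\Z^2$.

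Finally, the hypothesis that $t\sim_{\Omega,T} t'$ for all pairs $t,t'\in T$ says in particular that every $t\in T$ is linked to $0$ by a finite chain $0=t_0\approx_\Omega t_1\approx_\Omega\cdots\approx_\Omega t_n=t$. Iterating the inductive step along this chain yields $t\in\Z^2$, which is the desired conclusion.

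I do not expect any real obstacle here; the lemma is essentially a bookkeeping statement that integrality propagates across the relation $\approx_\Omega$ once the base point is integer. The substantive work of the section will lie in \lemref{lem:integertiling} itself, where one must actually construct a tiling whose translates form a single $\sim_{\Omega,T}$-equivalence class, presumably by taking a given tiling of $\R^2$ by $\Omega$ and sliding its several vertex-sharing components into alignment until only one equivalence class remains.
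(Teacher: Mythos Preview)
Your proof is correct and essentially identical to the paper's: both observe that $t\approx_\Omega t'$ forces $t-t'\in\Z^2$ (you write this as $t=t'+v'-v$, the paper states it directly), and then propagate integrality along a finite $\approx_\Omega$-chain from $0$ to an arbitrary $t\in T$. The only cosmetic difference is that the paper phrases the final step as a telescoping sum $t-0=\sum(t_{i+1}-t_i)\in\Z^2$ rather than an induction.
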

\begin{proof}
If $\Omega$ is an integer tile $t\approx_\Omega t'$ implies that $t-t'\in \mathbb Z^2$.  Let $t \in T$. By assumption, since $0\in T$, we have $0 \sim_{\Omega} t$. This means that there is a path $0 = t_1, t_2, \dots, t_n = t$ with $t_j \in T$ and $t_i \approx_\Omega t_{i+1}$ for $i=1,\dots,n-1$. Therefore,
$$
t - 0 =\sum_{j=1}^{n-1} t_{i+1}-t_i
$$ is in $\Z^2$, as claimed. 
\end{proof}

\subsection{Sliding} 
Lemma~\ref{lem:tile+vertex=>tiling} reduces the proof of Lemma~\ref{lem:integertiling} to showing, for any integer tile $\Omega\subset \R^2$, the existence of a tiling $T\in \Tile(\Omega;\R^2)$ that is \emph{$\sim_{\Omega}$-connected}, i.e., has a single $\sim_{\Omega}$-equivalence class.
 This will be achieved in this subsection, by developing a \emph{sliding} machinery, which allows one to shift the $\sim_{\Omega}$-components of a given tiling $T\in \Tile(\Omega;\R^2)$, while preserving the tiling, to eventually merge all of the components into a tiling $T'\in \Tile(\Omega; \R^2)$ with a \emph{single} $\sim_{\Omega}$-equivalence class (see Figure~\ref{fig:slide} for an illustration).

This is done by first analysing the structure of the $\sim_{\Omega}$-tiling-components, and showing that they are invariant under \emph{sliding} in some direction $\vec v\in \R^2$, i.e., $\Omega+T_i +\delta \vec v=\Omega+T_i$, for every $\delta\in \R$ and $T_i\in T/\sim_\Omega$ (see Figure~\ref{fig:components} for an illustration).

\begin{lemma} [Tiling-components slide]
\label{lem:tilings_slide}
Let $T$ be a tiling of $\mathbb R^2$ by a polygonal tile $\Omega$, let $T_i$, $i\in I$, be the  equivalence classes of \begin{equation}\label{eq:tiling_slide}
    T=\bigsqcup_{i\in I} T_i
\end{equation}
by  $\sim_{\Omega}$. Then there exists a unit vector $\vec v\in \R^2$ such that each set $W_i \coloneqq \Omega + T_i$ is $\vec v   \mathbb R-$invariant. 
\end{lemma}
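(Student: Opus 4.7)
The plan is to show that the boundary $\partial W_i$ of each component is a disjoint union of parallel infinite lines, all with a single common direction $\vec v$, and then to conclude that $W_i$, being an open set with such a boundary, is a union of open strips parallel to $\vec v$ and hence $\vec v\R$-invariant. The case $|I|=1$ (so $W_1=\R^2$) is trivial, so assume $|I|\ge 2$.

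First I establish the key local claim: at every point $p\in \partial W_i$, the boundary $\partial W_i$ coincides, in a neighbourhood of $p$, with a single straight line through $p$. The main ingredient is the constraint coming from the equivalence relation $\approx_\Omega$: no point can simultaneously be a vertex of a tile from $T_i$ and of a tile from some $T_j$ with $j\neq i$, as that would force $i=j$. Consequently, at $p$, any tile belonging to a class other than the one having $p$ as a vertex must meet $p$ in the interior of one of its edges, and hence locally occupies a half-plane. Two distinct such tiles meeting $p$ would have to lie in opposite half-planes across a common edge line (any other configuration produces overlapping interiors), exhausting a neighbourhood of $p$ and leaving no room for the vertex-owning tile. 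Therefore, up to the symmetric roles of $W_i$ and its complement, exactly one tile from each side sits at $p$; their shared boundary line is a single straight line through $p$, and this line is $\partial W_i$ locally.

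It follows that $\partial W_i$ is locally a $1$-manifold with no corners. Since $\partial W_i$ is a union of line segments (edges of tiles), each connected component is contained in a single straight line; a bounded such component would close up into a polygon and hence have corners, so every component is an entire infinite line. Two lines of $\partial W_i$ in distinct directions would meet transversally, producing a corner and contradicting the local claim; hence all the lines of $\partial W_i$ are parallel to some common direction $\vec v_i$. If $W_i$ and $W_j$ share a boundary line $\ell$, both $\vec v_i$ and $\vec v_j$ equal the direction of $\ell$, so $\vec v_i=\vec v_j$; the adjacency graph of $\{W_i\}_{i\in I}$ is connected (otherwise $\R^2$ would split into two disjoint closed sets), so all $\vec v_i$ coincide with a common unit vector $\vec v$. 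Since two distinct lines parallel to $\vec v$ are disjoint, $\R^2\setminus \partial W_i$ is a disjoint union of open strips parallel to $\vec v$; the open set $W_i$, being a union of some of these connected components, is a union of such strips, and therefore $\vec v\R$-invariant.

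The main obstacle is the local claim in the second paragraph: the subtlety lies in verifying that, however many tiles meet $p$, the vertex-sharing constraint forces $\partial W_i$ to pass through $p$ as a single straight line without turning. Once this is in hand, the passage to parallelism within each component, then to a single common direction across all components, and finally to the strip decomposition, is routine plane topology.
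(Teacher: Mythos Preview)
Your proof is correct and follows essentially the same approach as the paper: both arguments establish that $\partial W_i$ has no corners (the paper phrases this as the absence of ``proper vertices'') by exploiting the vertex-sharing constraint, and then conclude that $\partial W_i$ is a union of parallel lines, forcing $W_i$ to be translation-invariant in that direction. Your adjacency-graph argument for obtaining a single common direction $\vec v$ is slightly more roundabout than the paper's direct observation that two non-parallel invariant strips necessarily intersect, but the substance is the same.
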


\begin{figure}[!h]
    \centering
    \includegraphics[width=0.5\linewidth]{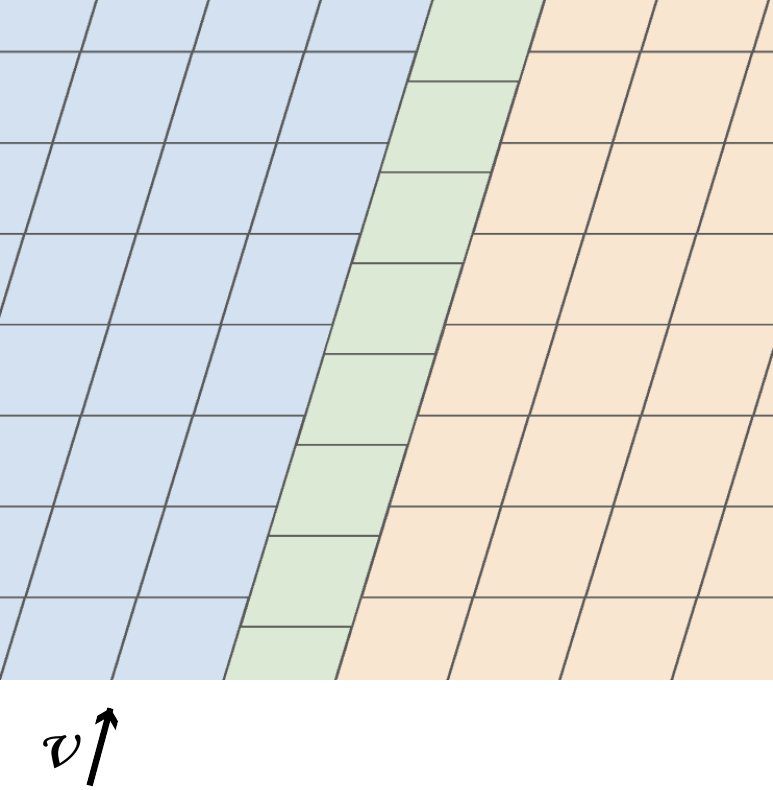}
    \caption{In blue, green and orange, three \emph{sliding components}, $W_1, W_2, W_3$, of a tiling, with a sliding direction $v$.}
    \label{fig:components}
\end{figure}

\begin{remark}
    See also \cite[Theorem 1.7]{ggrt} for a related decomposition into ``rational componenets'' of tilings of a torus.
\end{remark}

\begin{proof}
For  $ D\subset \mathbb R^d$ we say that $x\in \partial D$ is a \emph{proper vertex of $D$} if for no neighborhood $B_\epsilon(x)$, the intersection of $B_\epsilon(x)\cap \partial D$ is a segment.

We first show that if $x_0\in \partial(\Omega+S)$ is a proper vertex of $\Omega+S$, then $S$ is not $\sim_{\Omega}$-closed, i.e., there exist $t\in T\setminus S$ and $s\in S$ such that $s\approx_\Omega t$.

Indeed, for all $\epsilon$ there exists a $t_\epsilon$ in $T\setminus S$ so that $B_\epsilon(x_0)\cap (\Omega+t_{\epsilon}) \neq \emptyset$. Since the set $T$ is uniformly discrete and $\Omega$ is bounded, there must exist $t\in T\setminus S$ so that $(\Omega + t) \cap B_{\epsilon}(x_0) \neq \emptyset$ for all $\epsilon>0$ (or, equivalently, $x_0\in \overline{\Omega} + t$). 
    Let $\{t_1, \dots t_n \}$ be the set of all elements in $T\setminus S$ with the property that $x_0 \in \overline{\Omega} +t_i$, and $\{s_1, \dots s_m\}$ be the set of the elements in $S$ with the property that $x_0 \in \overline{\Omega} +s_i$. (Note that none of these sets is empty.) Then we must be in one of the following three scenarios:
    \begin{enumerate}
        \item $m=1$, and $x_0\not\in V(\Omega+ s_1)$;
        \item $n=1$, and $x_0\not\in V(\Omega + t_1)$;
        \item There are $1\leq j\leq n$ and $1\leq k\leq m$ such that $x_0\in V(\Omega + t_j)\cap V(\Omega + s_k)$.
    \end{enumerate}

    In the first case, we have that $x_0\in \overline{\Omega}+s$ for $s\in S$ only if $s=s_1$.
    Therefore $x_0$ is not a proper vertex of $\Omega+S$ as $x_0\not\in V(\Omega)+s_1$.    
    In the second case, by the same argument we show that $x_0$ is not a proper vertex of $\Omega + (T\setminus S)$.
    This leads to a contradiction, since $\partial(\Omega + (T\setminus S))= \partial(\Omega + S)$. In the third case, our conclusion holds; indeed, in this case, $t_j\approx_\Omega s_k$.

Now, if an open set $D$ has no proper vertices, then $\partial D$ is a closed subset of $\mathbb R^2$ which is locally a line, and therefore a union of parallel lines in  direction $\vec v\in \R^2$. Since $\partial D$ is invariant under translations by $\vec v$, also $D$ is invariant under translations by $\vec l$. Indeed,  since $\partial D$ does not contain $x$ for any point $x\in D$, we must have that  $\partial D$ does not intersect $x+\vec v\mathbb R$. In particular, $x+\vec v\mathbb R\subset D$.
 
We conclude that  $\mathbb R^2 = \bigsqcup_{i\in I} \Omega + T_i$, where $\Omega + T_i$ is $\vec v_i$ invariant. However, as $\Omega + T_i$ is not empty, and $\Omega + T_i\cap \Omega + T_j = \emptyset$ if $i\neq j$, all of the vectors $\vec v_i$ must be parallel. This concludes the proof. 
\end{proof}

Using Lemma~\ref{lem:tilings_slide}, one can now slide the $\sim_\Omega$-components of a given tiling $T\in\Tile(\Omega;\R^2)$ to merge them into a single $\sim_\Omega$-component of another tiling $T'\in Tile(\Omega;\R^2)$.

\begin{lemma}[Merge components by sliding] \label{lem:onecomp}
Suppose that $T$ is a tiling of $\R^2$ by a polygonal tile $\Omega$, and let $T_1, T_2,\dots$ and $\vec v\in\R^2\setminus \{0\}$ be as in Lemma~ \ref{lem:tilings_slide}. Then there exist real coefficients $s_1, s_2,\dots$ such  that $$T'\coloneqq \bigsqcup T_i+s_i \vec v\in \Tile(\Omega;\R^2)$$  and $T'$ has a single $\sim_{\Omega}$-equivalence class.
\end{lemma}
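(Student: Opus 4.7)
The plan is to slide the components $T_i$ parallel to $\vec v$, one at a time, so that each newly-placed component is glued to a previously-placed one by vertex sharing. Since every $W_i=\Omega+T_i$ is $\vec v\R$-invariant, replacing $T_i$ by $T_i+s\vec v$ leaves $W_i$ unchanged; hence $T'=\bigsqcup_i(T_i+s_i\vec v)$ is automatically in $\Tile(\Omega;\R^2)$ for any choice of reals $s_i$, and only the $\sim_{\Omega,T'}$-equivalence structure is at stake.

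First I would organize the combinatorics. Let $\vec v^\perp$ be orthogonal to $\vec v$ and write $\pi(x)=\langle x,\vec v^\perp\rangle$. Each $W_i$ takes the form $\pi^{-1}(U_i)$ for an open set $U_i\subset \R$; the $U_i$ are pairwise disjoint and $\bigcup_i U_i=\R$ up to a countable exceptional set. In particular the index set is countable, so we may list the components as $T_1,T_2,\dots$. Declare $T_i$ and $T_j$ to be adjacent if $\overline{U_i}\cap \overline{U_j}\neq \emptyset$; the corresponding graph is connected since $\R$ is connected. Therefore, after relabeling if necessary, we may assume that for every $k\geq 2$ the component $T_k$ is adjacent to some $T_{i(k)}$ with $i(k)<k$.

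Second I would establish the crucial single-step claim: if $T_i$ and $T_j$ are adjacent, then there exists $s\in \R$ such that after sliding $T_j$ by $s\vec v$, some element of $T_j+s\vec v$ is $\approx_\Omega$-related to some element of $T_i$. Indeed, let $\ell=\pi^{-1}(p)$ be a common boundary line, $p\in \overline{U_i}\cap \overline{U_j}$. For each $x\in \ell$ we have $x\in \overline{W_i}\setminus W_i$, so $x\in \partial(\Omega+t)$ for some $t\in T_i$; since $\Omega+t\subset \overline{W_i}$ sits on one side of $\ell$, $x$ must lie on an edge of $\Omega+t$ that is contained in $\ell$. Thus $\ell$ is covered by finite-length edges of $T_i$-translates, and the endpoints of those edges constitute an infinite discrete subset $A_i\subset (V(\Omega)+T_i)\cap \ell$. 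Similarly $A_j\subset (V(\Omega)+T_j)\cap \ell$ is infinite. Pick $a_i\in A_i$ and $a_j\in A_j$: the vector $a_i-a_j$ lies along $\vec v$ (both points sit on $\ell$), so it equals $s\vec v$ for some $s\in \R$, and then $a_i\in (V(\Omega)+T_i)\cap (V(\Omega)+T_j+s\vec v)$, yielding the required adjacency relation.

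Finally I would iterate. Set $s_1=0$; for $k\geq 2$, apply the single-step claim to $T_k$ and the already-shifted $T_{i(k)}+s_{i(k)}\vec v$ (which is adjacent to $T_k$ in the same sense, since sliding does not alter $U_{i(k)}$) to produce $s_k\in \R$ such that $T_k+s_k\vec v$ vertex-shares with $T_{i(k)}+s_{i(k)}\vec v$. By induction on $k$, every component of $T'=\bigsqcup_k(T_k+s_k\vec v)$ lies in the same $\sim_{\Omega,T'}$-equivalence class as $T_1$, completing the proof. The principal technical point is the non-emptiness of the boundary-line vertex sets $A_i$, which stems from the fact that edges of $T_i$-translates, being finite, must cover the infinite line $\ell$ and hence contribute endpoints on it.
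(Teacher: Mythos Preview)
Your proof is correct and follows essentially the same approach as the paper: project perpendicular to $\vec v$, arrange the components so that each new one abuts a previously placed one, and choose the shift to align a vertex on the shared boundary line with a vertex of the neighbour. The only point you gloss over that the paper makes explicit is the local finiteness of the family $\{U_i\}$ (coming from the uniform discreteness of $T$ and the boundedness of $\Omega$), which is what actually justifies your ``connected since $\R$ is connected'' step and the existence of the relabeling with $i(k)<k$.
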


\begin{figure}[!h]
    \centering
    \includegraphics[width=0.4\linewidth]{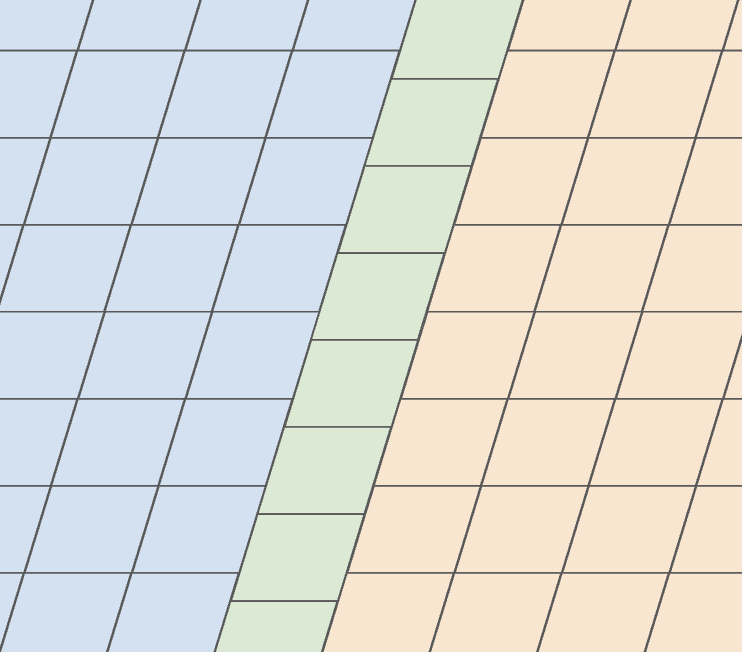}
    \hspace{2em}
    \includegraphics[width=0.415\linewidth]{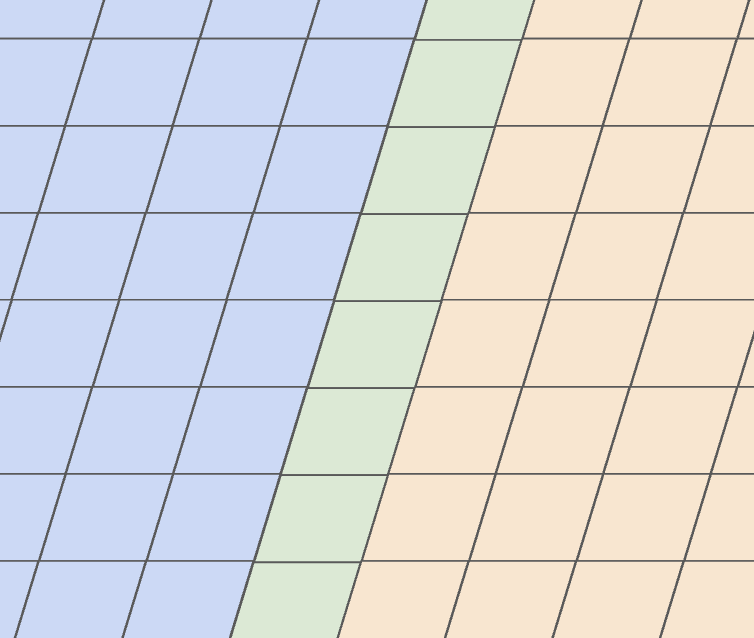}
    \caption{The  $\sim_\Omega$-classes from Figure~\ref{fig:components} before sliding (left) and after sliding (right).}
    \label{fig:slide}
\end{figure}

\begin{proof}
    Applying an affine transformation, we may assume without loss of generality that $v=(0,1)$. Denote $W_i\coloneqq \Omega + T_i$.
    Let $L= \R\times \{0\}$, and $L_i\coloneqq  \{(x,0)\colon \exists y, (x,y)\in W_i \}$. Then, since $\Omega$ is open and $T=\bigsqcup T_i$ is a tiling by $\Omega$, the sets $L_i$ are open, non-overlapping and non-empty.   Since the set $T$ is a tiling by $\Omega$, the distance between any two elements in $T$ is at least  $$\delta\coloneqq \min\{|x|\colon x\in \R^2,\; \Omega\cap(\Omega+x) \text{ has zero measure }\}>0.$$
    Thus, for any $R$ there are finitely many $L_i$ at distance at most $R$ from $0$, and so, by reordering if needed, we may assume that the sets $L_i$ are ordered so that $d(L_{i},0)\le d(L_{i+1},0)$.
    This implies that for $i>1$, one has $\overline L_i \cap  \bigcup_{j< i} \overline L_j\neq \emptyset$. 
    
    We will define the numbers $s_i$ by induction.  Set $s_1=0$. For $i>1$ suppose that  $s_1, \dots , s_{i-1}$ were already chosen to satisfy the claim. We will now define $s_i$  from the values of $s_1, \dots , s_{i-1}$. From the ordering of $L_1,L_2,\dots$ it follows that there must exist $j<i$ and $x\in L$ such that $x\in \partial L_i \cap \partial L_j$, or equivalently, $$x+v\R \subset \partial (\overline \Omega + T_i)\cap \partial (\overline \Omega + T_j).$$ Thus, there are $t_i\in T_i, t_j \in T_j$ such that both $V(\Omega+t_i)\cap (x+\vec v\R)$ and $V(\Omega+t_j)\cap (x+\vec v\R)$ are non-empty. We can therefore choose  $v_i\in V(\Omega+t_j)\cap (x+\vec v\R)$ and $v_j \in V(\Omega+t_j)\cap (x+\vec v\R)$. We then set  $s_i\in \R$ to satisfy $$(s_j-s_i) \vec v= v_i - v_j,$$ which ensures that an element of $T_i+s_i\vec v$ is $\sim_{\Omega}$-connected to an element of $T_j+s_j\vec v$, while preserving the $\sim_{\Omega}$-connectedness of $T_i$ and $T_j$.
\end{proof}

Combining Lemmas \ref{lem:tile+vertex=>tiling} and \ref{lem:onecomp} we obtain Lemma~\ref{lem:integertiling}, as needed.

\section{Encoding an integer tile as a discrete tile}\label{sec:discrete}
In this section, we prove Theorems~\ref{thm:ptc} and Corollary~\ref{cor:decide} by a reduction from the continuous setup to the discrete setup.

\subsection{Discrete translational tilings} 
Let $(G,+)$ be a finitely generated Abelian group. A finite set $F\subset G$ tiles $G$ by translations if there exists a set $T\subset G$ such that $F\oplus T=G$, namely, the translations of $F$ along $T$: $F+t$, $t\in T$, cover every point in $G$ exactly once. As in the continuous setup, we denote
$$\Tile(F;G)\coloneqq \{T\subset G\colon F\oplus T=G\}.$$ 
The discrete version of the periodic tiling conjecture is know to hold in $\Z$ \cite{N} and $\Z^2$ \cite{BH, GT}, but was recently disproved in $\Z^2\times G_0$ for some high-rank finite groups $G_0$, and thus (using \cite{bgu}) in $\Z^d$ for sufficiently large $d$ \cite{GT22}. 

\subsection{Discretization} 
In this subsection, we encode any rational polygonal set $\Omega$ as a finite set in a two-dimensional lattice, in a way that preserves the tiling properties of $\Omega$. This will enable us to utilize the proof of the two-dimensional discrete periodic tiling conjecture \cite{BH, GT} to obtain Theorem~\ref{thm:ptc} and deduce Corollary~\ref{cor:decide}.

Let $\Omega$ be an integer  set.
    Consider all integer translates of the finitely many segments in $E(\Omega)$. These segments partition  $[0,1]^2$ into $M$ open polygonal sets $P_0,\dots,P_{M-1}$ (up to null sets), see Figure \ref{fig:lines}.  
    \begin{figure}[ht]
    \centering
    
    \includegraphics[width = .35\textwidth]{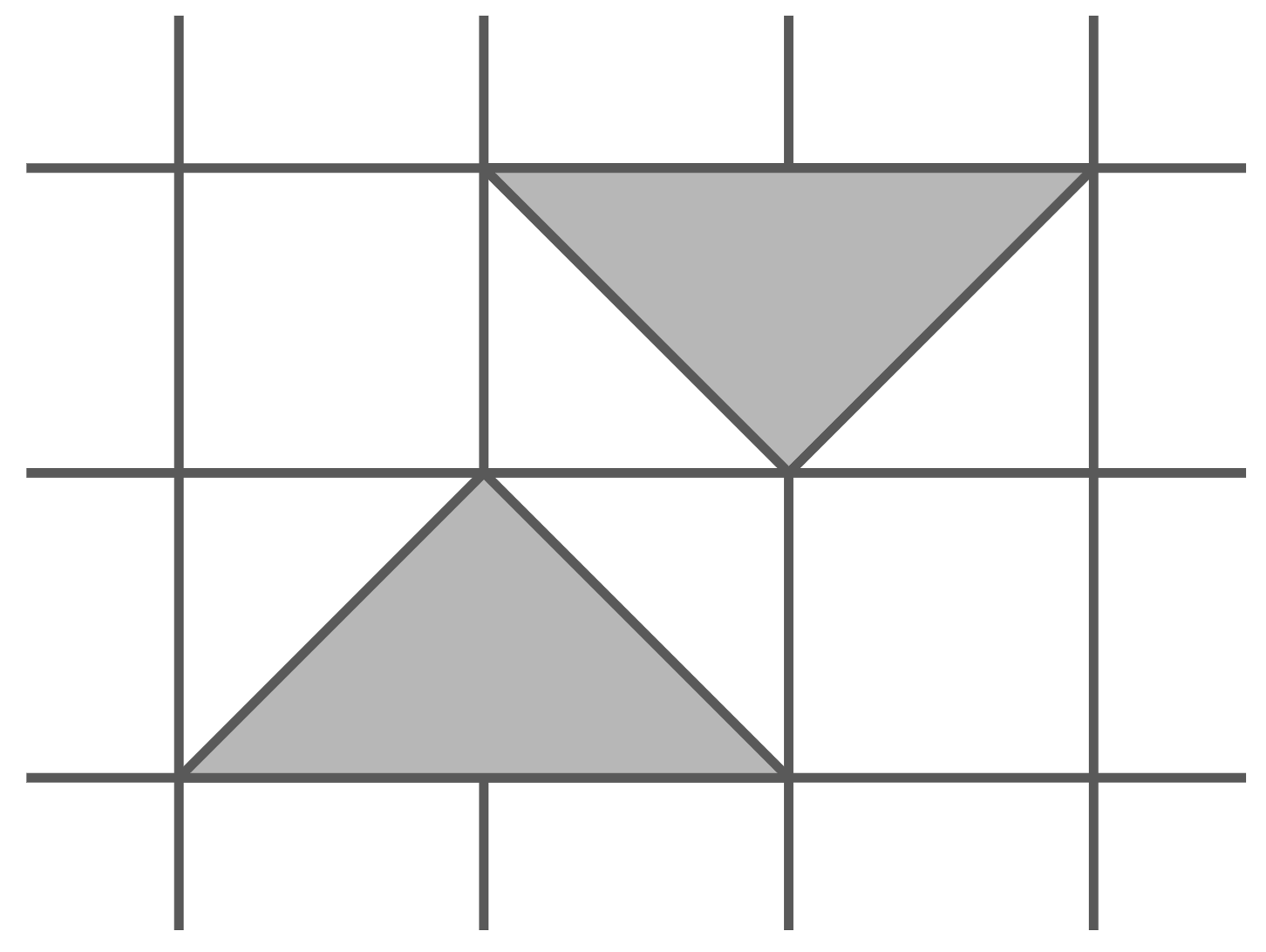}
    \hspace{.15\textwidth}
    \includegraphics[width = .35\textwidth]{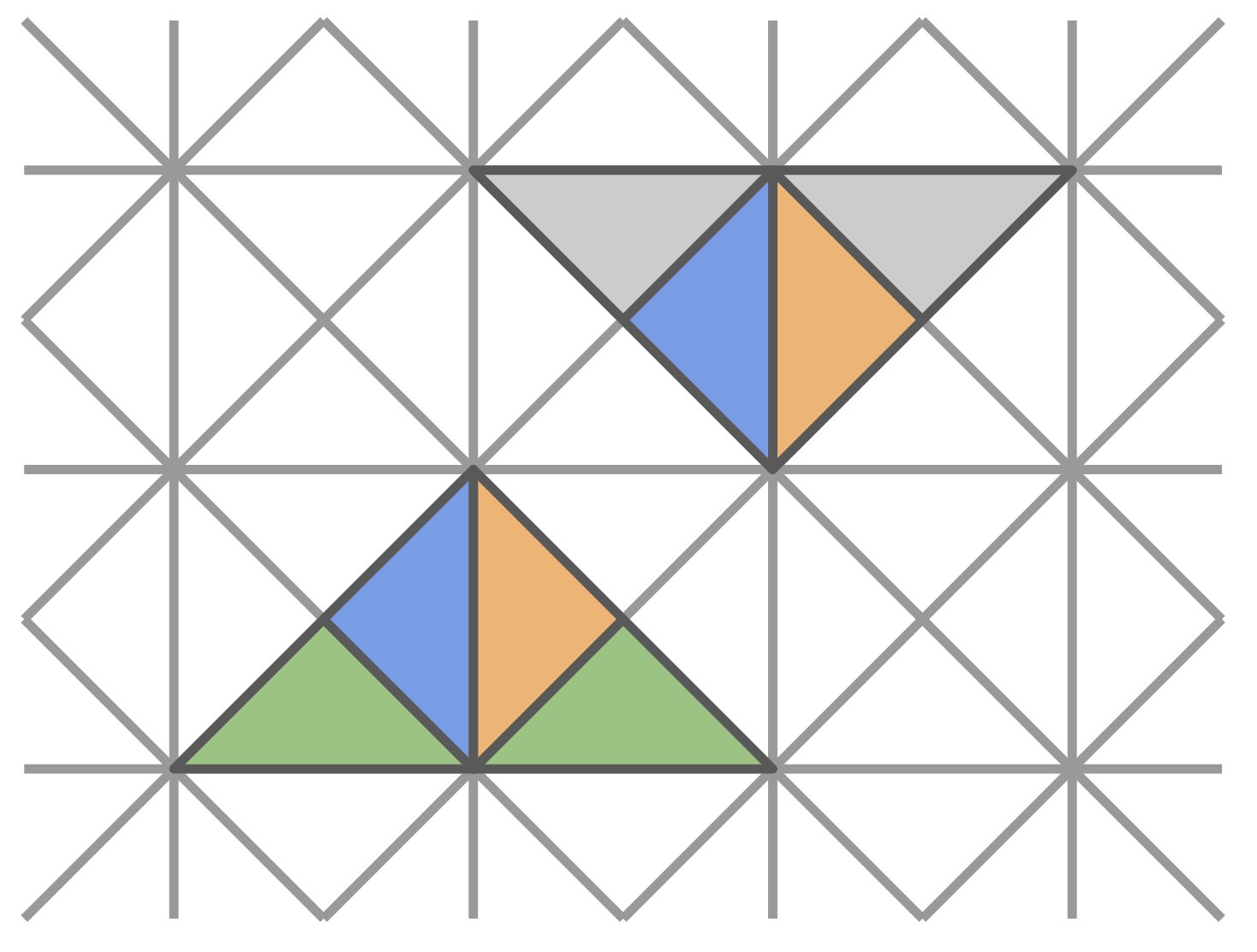}

    \caption{(Left) An integer set $\Omega$ and (Right) its associated partition into translations of subsets of $[0,1]^2$, with $P_0$ in grey; $P_1$ in green; $P_2$ in orange; and $P_3$ in blue.}
    \label{fig:lines}
\end{figure}

    Observe that for each $v \in \Z^2$ and each $P_i$, either $(v + P_i)$ is in $\Omega$ or is disjoint from $\Omega$. Let $N = N(\Omega) = 3k+4$, where $k\in\N$ is such that $M \leq k^2$. To each $P_i$ we associate a subset $S_i$ of $\{0,\dots,N-1\}^2$ as follows:
    \begin{itemize}
     \item[1.] For $1\leq  i\leq M-1$, let $(a_i,b_i)\in \{1,\dots,k \}^2$ be the unique pair such that $(a_i-1)+k(b_i-1)=i$.
     We define the set $S_i$ to consist of the $8$ points $$(\tilde a, \tilde b)\in \{0,\dots,N-1\}^2\setminus \{(3a_i,3b_i)\}$$ which satisfy $|\tilde a- 3a_i|\le 1$ and $|\tilde b - 3b_i|\le 1$.
     \item[2.] For $i=0$, we first define $\tilde S_0 \coloneqq \{0,\dots,N-1\}^2  \setminus \bigcup_{j=1}^{M-1} S_j$. The set $S_0$ is defined as $\tilde S_0 \cup \{(N,1),(1,N)\} \setminus\{(0,1), (1,0)\}$.
    \end{itemize}
    See Figure~\ref{fig:Si}.
    \begin{figure}[ht]%
    \centering
     \subfloat[\centering When $M=4$, $k=2$ and $N=10$ (e.g., for $\Omega$ as in Figure \ref{fig:lines}).]{{\includegraphics[width=6cm]{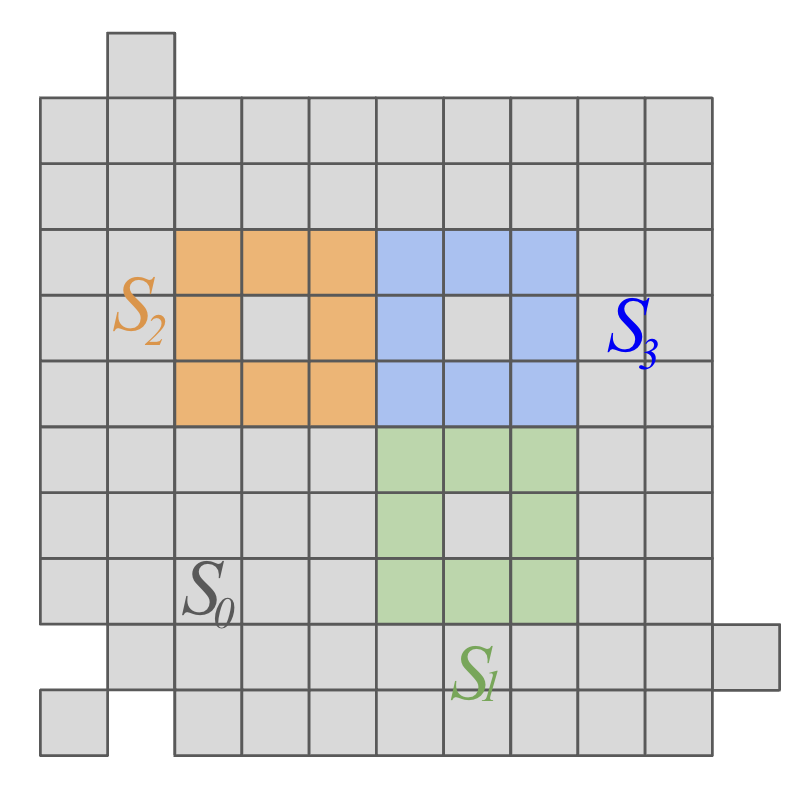} }}%
    \qquad
    \subfloat[\centering When $M=7$, $k=3$ and $N=13$.]{{\includegraphics[width=6cm]{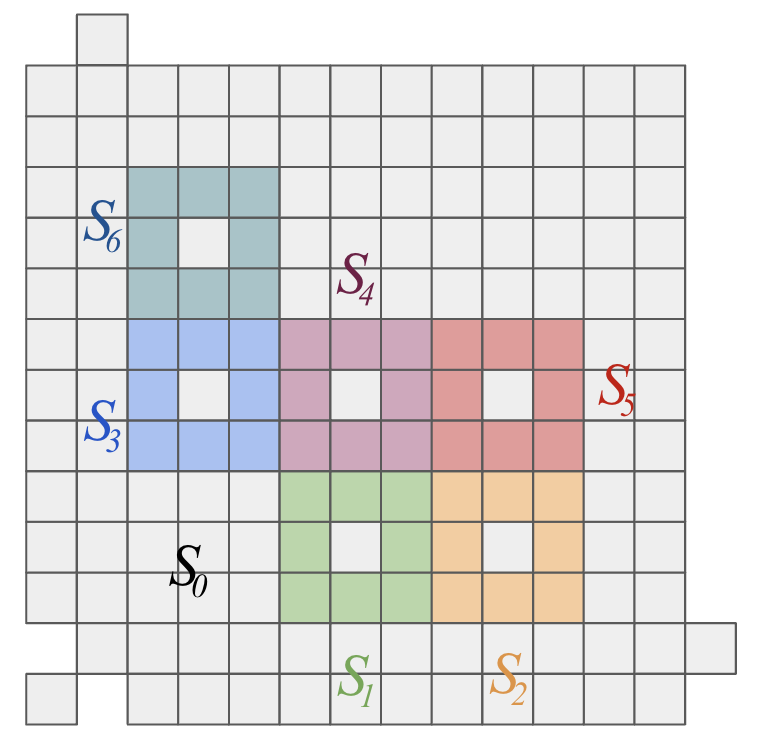} }}%
    \caption{The associated sets $S_0,\dots,S_{M-1}$ in two different cases. (Here, a point $(a,b)\in\{0,\dots,N\}^2$ is represented by a square $(a,b)+[0,1]^2$.)}
    \label{fig:Si}%
\end{figure}

 Now we associate to $\Omega$ the (discrete) set:
\begin{equation}\label{eq:F}
F = F(\Omega) \coloneqq \bigsqcup_{v \in \mathbb Z^2}\; \bigsqcup_{\substack{i=0,\dots ,M-1\\ (v + P_i) \subset \Omega}} \left( v+ \tfrac 1 N S_i \right)\subset \tfrac 1 N  \Z^2.
 \end{equation} 
 (See Figure \ref{fig:new_tile}.) Thus, for every $v\in\Z^2$, $S_i\in\{S_0,\dots,S_{M-1}\}$, either $(v+ \frac{1}{N} S_i)\subset F$ or $(v+\frac{1}{N} S_i)\cap F$ is empty.
 
    \begin{figure}[ht]%
    \centering
    \includegraphics[width=.8\textwidth]{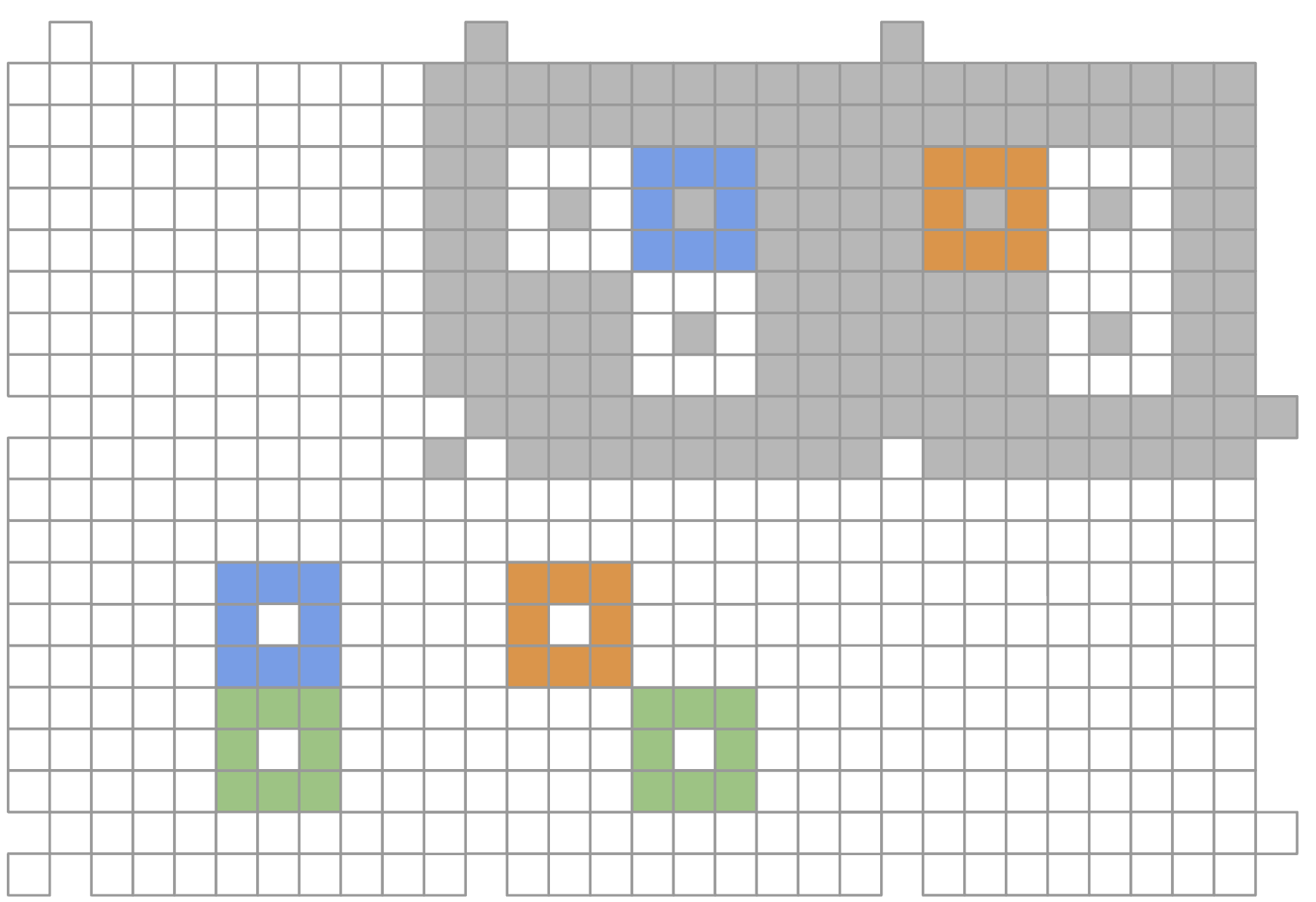}
    \caption{The tile $F(\Omega)\subset N^{-1}\Z^2$ for $\Omega$ as in Figure \ref{fig:lines}.  A point $(a,b)\in N^{-1}\Z^2$ is represented here by a square $(a,b)+[0,\tfrac{1}{N}]^2$. The colored squares correspond to the elements in $F\subset N^{-1} \mathbb Z^2$, while the white squares correspond to the elements in $N^{-1}\Z^2$ that are not in $F$.}
    \label{fig:new_tile}%
\end{figure}

We show that this $F$ ``encodes'' the tiling properties of $\Omega$:
\begin{lemma}\label{lem:raster}
    Let $\Omega$ be an integer set, and let $N=N(\Omega)$ and $F=F(\Omega)$ be as constructed above. Then we have:
    \begin{itemize}
        \item[(i)] The set $\Tile(\Omega;\R^2)$ is empty if and only if the set $\Tile(F;\tfrac{1}{N}\Z^2)$ is empty.
        \item[(ii)] For every set $0\in T\subset \R^2$,  $T\in\Tile(F; \tfrac{1}{N}\Z^2)$ if and only if  $T\in \Tile(\Omega;\R^2)$ with $T\subset \Z^2$.
        \item[(iii)] If $T_0, T_1$ are subsets of $\mathbb Z^2$, and $\vec v\in \Z^2$, then  $\Omega \oplus T_0 + \vec v = \Omega \oplus T_1$ if and only if  $F \oplus T_0 +\vec v = F \oplus T_1$.
    \end{itemize}
\end{lemma}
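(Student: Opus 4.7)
My plan is to reduce both the continuous tiling problem for $\Omega$ and the discrete tiling problem for $F$ to the same combinatorial counting condition indexed by $\Z^2 \times \{0, \ldots, M-1\}$, and then separately handle the subtlety of non-integer translates in $\Tile(F; \tfrac{1}{N}\Z^2)$.

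For the core reduction, I would define $I_i \coloneqq \{w \in \Z^2 : w + P_i \subset \Omega\}$ for each $0 \leq i \leq M-1$. The partition property---the fact that the pieces $v + P_i$, $v \in \Z^2$, $0 \leq i \leq M-1$, tile $\R^2$ up to null sets, and that each such piece is either contained in or disjoint from any integer translate of $\Omega$---shows that a set $T \subset \Z^2$ satisfies $\Omega \oplus T = \R^2$ if and only if $|T \cap (v - I_i)| = 1$ for all $v \in \Z^2$ and all $0 \leq i \leq M-1$. On the discrete side, the same combinatorial condition governs $F \oplus T = \tfrac{1}{N}\Z^2$ for $T \subset \Z^2$: partitioning $\tfrac{1}{N}\Z^2$ into cells $C_v \coloneqq v + \tfrac{1}{N}\{0, \ldots, N-1\}^2$ and using that $F = \bigsqcup_i \bigsqcup_{w \in I_i}(w + \tfrac{1}{N} S_i)$ by construction, the cell-by-cell equation reduces to the same count (the corner swap $\{(N,1),(1,N)\} \setminus \{(0,1),(1,0)\}$ in $S_0$ only redistributes two edge-points between adjacent cells and cancels globally). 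This identification immediately gives (iii) and the ``integer-to-integer'' direction of (ii); combined with \lemref{lem:integertiling}, it also yields (i).

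The harder step is completing (ii): I must show that any $T \in \Tile(F; \tfrac{1}{N}\Z^2)$ containing $0$ is automatically contained in $\Z^2$. Since $F \subset \tfrac{1}{N}\Z^2$ and $0\in T$, automatically $T \subset \tfrac{1}{N}\Z^2$. To upgrade this to $T \subset \Z^2$, I would exploit the rigid ``fingerprint'' structure built into the $S_i$'s: each $S_i$ with $i \geq 1$ is a $3 \times 3$-minus-center ``asteroid'' centered at $(3a_i, 3b_i)$, separated from any other asteroid by a padding of at least two empty $\tfrac{1}{N}\Z^2$-grid-points (this is the reason $N = 3k+4$ rather than something smaller), and the asymmetric corner adjustment in $S_0$ breaks any residual translational symmetry of the bulk pattern. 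Assuming for contradiction that some $t \in T$ has $t \notin \Z^2$, I would trace the local configuration near one of the displaced corner points of $F + t$ and exhibit a specific point of $\tfrac{1}{N}\Z^2$ that is forced to be either doubly-covered or uncovered by $T$, contradicting $F \oplus T = \tfrac{1}{N}\Z^2$.

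The main obstacle is this last integrality step. The combinatorial reduction is essentially bookkeeping once the partition property is in place, but the integrality argument requires a careful local analysis of how the asteroid fingerprints in different translates $F + t$ can interlock. The crux will be verifying that the swapped corner points $(N,1),(1,N)$ in the definition of $S_0$ really do provide enough asymmetry to rule out all fractional translates; without this corner trick, $F$ would retain too much internal symmetry and the argument would fail.
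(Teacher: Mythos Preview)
Your combinatorial reduction via the counting condition $|T\cap(v-I_i)|=1$ is correct and is exactly what the paper does (phrased there as ``there exists a unique $t_{i,x}\in T$ with $(x+P_i)\subset(\Omega+t_{i,x})$''); your observation that the corner swap only redistributes two points between adjacent cells and hence does not alter the global integer counting condition is also right. So parts (i), (iii), and the integer-to-integer direction of (ii) go through as you describe.

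Where your plan diverges from the paper, and where it contains a small factual error, is the integrality step. First, the claim that the asteroids are ``separated from any other asteroid by a padding of at least two empty grid-points'' is false: adjacent asteroids $S_i$ (centered at $(3a_i,3b_i)$ and $(3a_i+3,3b_i)$) abut with no gap, and $N=3k+4$ is chosen to leave two rows/columns between the asteroid block and the \emph{cell boundary}, not between neighbouring asteroids. More importantly, the paper does not use the asteroids for integrality at all. Their role is purely to carry the fingerprint data (which $P_i$ occupies each cell); the integrality is forced solely by the tab--notch structure on $S_0$ coming from the corner swap. Concretely, the paper argues that if two pixelated regions $F_0+[0,\tfrac1N]^2+t$ and $F_0+[0,\tfrac1N]^2+t'$ share an edge then $t-t'\in\Z^2$ (the tabs at $(N,1),(1,N)$ must fit notches at $(0,1),(1,0)$), and then invokes connectivity of the edge-adjacency graph to conclude $T$ lies in a single $\Z^2$-coset. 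Your plan to find a single doubly-covered or uncovered point from a hypothetical non-integer $t$ would in practice also lead you to this adjacency propagation, but as stated it overweights the asteroids and underweights the connectivity step. The paper's route is shorter: drop the asteroids from the integrality discussion and run the tab--notch adjacency argument on $F_0$ alone.
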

\begin{proof}
Observe that  (i) follows from  (ii)  and Lemma~\ref{lem:integertiling}. Indeed, if $\Tile(\Omega;\R^2)$  in non-empty then by Lemma~\ref{lem:integertiling} there exists $0\in T\in\Tile(\Omega;\R^2)$ such that $T\subset \Z^2$, thus, (ii) implies that $T\in \Tile(F;\tfrac{1}{N}\Z^2)$. Conversely, if  $\Tile(F;\tfrac{1}{N}\Z^2)$ is non-empty then it contains a set $ T$ such that $0\in T$; thus, again, by (ii), $T\in \Tile(\Omega;\R^2)$.

 We prove (ii). Suppose first that  $0\in T\in \Tile(\Omega;\R^2)$ and $T\subset \Z^2$. Then, for any $P_i\in\{P_0,\dots,P_{M-1}\}$ and $x\in\Z^2$ there exists a unique translate $t_{i,x}\in T$ so that $(x+P_i)  \subset  (\Omega+t_{i,x})$. Note  that by the definition of $S_i$ we have $(x+ P_i) \subset (\Omega+t_{i,x})$ if and only if $(x+ \frac 1 N  S_i) \subset (F + t_{i,x})$. This implies that
  $$ T\oplus F = \bigsqcup_{\substack{x\in \Z^2 \\ i\in\{0,\dots,M-1\}}} \left( x + \tfrac 1 N S_i\right) = \tfrac 1 N \Z^2 $$
  i.e., $T\in \Tile(F;\left( \frac1 N \Z\right)^2)$.
  
  Conversely, suppose that $0\in T\in \Tile(F;\frac 1 N \Z^2)$, then, by construction of $S_0$ and since $F$ is a tile of $\frac 1 N \Z^2$, we must have that the set $$F_0\coloneqq \bigsqcup_{v\in\Z^2}\bigsqcup_{(x+P_0) \subset \Omega}(x+\tfrac 1 N S_0) \subset F$$  is non-empty.
  Thus, since $T\in \Tile(F;\frac 1 N \Z^2)$, by construction of $F$, we have that  if $t,t'$ are two distinct elements of $T$ such that $F_0+[0,\frac 1 N]^2+t$ and $F_0+[0,\frac 1 N]^2+t'$ share an edge then $t'-t\in \Z^2$. This means that every set in $\Tile(F;\frac 1 N \Z^2)$ must be  contained in some coset of $\Z^2$ in $\frac 1 N \Z^2$. In particular, as by assumption $0\in T$, we must have $T\subset \Z^2$. 
  
   We now show that $T\in \Tile(\Omega; \R^2)$. Since $0\in T\in \Tile(F;\frac 1 N \Z^2)$,  for every $S_i\in\{S_0,\dots,S_{M-1}\}$ and $x\in\Z^2$,  there exists a unique $t_{i,x}\in T$ such that $(x+ \frac 1 N  S_i) \subset (F+t_{i,x})$. Thus, by construction of $S_i$, we have that
   $(x+ P_i) \subset (\Omega+t_{i,x})$; hence
  $$ T\oplus \Omega = \bigsqcup_{\substack{x\in \Z^2 \\ i\in\{0,\dots,M-1\}}} \left( x + P_i \right) = \R^2 $$
  i.e., $T\in \Tile(\Omega;\R^2)$. 

  It remains to prove (iii).  For $i=0,\dots,M-1$, let $$X_{i,0}\coloneqq \{x\in \Z^2\colon x+P_i\subset \Omega\oplus T_0 +\vec v\}, \quad X_{i,1}\coloneqq \{x\in \Z^2\colon x+P_i\subset \Omega\oplus T_1\},$$ and  $$Y_{i,0}\coloneqq \{x\in \Z^2\colon x+N^{-1}S_i\subset F\oplus T_0+\vec v\},\quad Y_{i,1}\coloneqq \{x+N^{-1}S_i\subset F\oplus T_1\}.$$ As argued above, we have $X_{i,j}=Y_{i,j}$ for every $i=1,\dots,M-1$ and $j=0,1$. 
  We therefore have that $X_{i,0}=X_{i,1}$ for every $i=0,\dots,M-1$ if and only if $Y_{i,0}=Y_{i,1}$ for every $i=0,\dots,M-1$. In other words, $\Omega \oplus T_0 + \vec v = \Omega \oplus T_1$ if and only if  $F \oplus T_0 + \vec v = F \oplus T_1$, as claimed.
\end{proof}

\subsection{} 

Combining Lemmas  \ref{lem:raster} with \cite[Theorem~1.5]{GT}, we can now prove Theorem~\ref{thm:ptc}, as follows.

\begin{proof}[Proof of Theorem~\ref{thm:ptc}]
    Let $\Omega$ be a rational polygonal tile and, by translation invariance, assume without loss of generality that $0\in V(\Omega)$. Then, as $V(\Omega)\subset \Q^2$ is finite, there exists $n\in \N$ such that $V(n\Omega)\subset \Z^2$. Let $\tilde \Omega \coloneqq n\Omega$. Note that $T\in \Tile(\Omega;\R^2)$ if and only if $nT \in \Tile(\tilde \Omega;\R^2)$ and $T$ is doubly-periodic if and only if $nT$ is doubly-periodic. 
Thus, we can equivalently prove the statement for $\tilde \Omega$. 

Let $N\in\N=N(\tilde \Omega)$ and  $F=F(\tilde \Omega)\subset N^{-1}\Z^2$ be as in  Lemma~\ref{lem:raster}.
Then, by Lemma~\ref{lem:raster}(i) and dilation invariance, as by assumption $\Tile(\tilde \Omega;\R^2)$ is non-empty, $\Tile(NF;\Z^2)$ is non-empty, i.e., $NF\subset \Z^2$ tiles $\Z^2$.

By \cite[Theorem~1.5]{GT}, we then have the existence of a doubly-periodic $\tilde T\subset \Z^2$  such that $\tilde T\in \Tile(NF;\Z^2)$. By Translation invariance, we can assume without loss of generality that $0\in\tilde T$. 

Letting $T\coloneqq N^{-1}\tilde T\subset N^{-1}\Z^2$ we have that $0\in T\in \Tile(F;N^{-1}\Z^2)$. By Lemma~\ref{lem:raster}(ii), we then have that $T\in\Tile(\tilde \Omega; \R^2)$ and $T\subset \Z^2$. As $T$ is doubly-periodic, this concludes the proof.
\end{proof}

\subsection{}
We now deduce Corollary~\ref{cor:decide}, as follows.

\begin{proof}[Proof of Corollary~\ref{cor:decide}]
    We describe an algorithm that computes whether a given rational polygonal set tiles $\R^2$ by translations:  Let $\Omega$ be a rational polygonal set. The algorithm first computes the number $N$ and the finite set $F\subset N^{-1}\Z^2$, as described in the proof of Lemma~\ref{lem:raster}. By Lemma~\ref{lem:raster}(i), $\Omega$ tiles $\R^2$ if and only if $F$ tiles $N^{-1}\Z^2$. 
    
    The algorithm then computes all the ways to tile finite regions in $N^{-1}\Z^2$ by translated copies of $F$ (``patches''),  with growing diameter. If $F$ doesn't tile, then, by compactness, the largest possible diameter of any patch formed by translated copies of
the $F$ must be finite. Thus the algorithm will detect that $F$, and thus $\Omega$, is not a tile after finitely many steps. If $F$ tiles, then a periodic tiling exists; and so, after finitely many steps the algorithm will detect a tiling of a patch that satisfies periodic boundary conditions (i.e., the patch a fundamental domain of some lattice in $N^{-1}\Z^2$), and will then output that $F$, and thus $\Omega$ is a tile.
\end{proof}

\section{Structure of tilings by a rational tile}\label{sec:weakper}

In this section, using Lemma~\ref{lem:raster} and analysing the structure of discrete tilings by $F(\Omega)$. and its connection to the structure of tilings by the original rational polygonal set, we prove Theorem~\ref{thm:weakper}. 

We begin by recalling the discrete analog of Theorem~\ref{thm:weakper}, which was established in \cite[Theorem~1.4]{GT}.

\begin{theorem}[Tilings in $\Z^2$ are weakly-periodic]\label{thm:weakZper}
    Let $F \subset \mathbb Z^2$ be a tile.
    Then there exist $c=c(F)$ and $d=d(F)$ such that there are $m < c$
    pairwise incommensurable vectors $h_1,\dots,h_m\in \mathbb{Z}^2\setminus \{0\}$ with $|h_j|\leq d^{m}$ such that every tiling $T$ of $\mathbb{Z}^2$ by $F$ admits a decomposition 
    \begin{equation}\label{eq:weakZper}
        T=T_1\sqcup \dots \sqcup T_m
    \end{equation} such that $T_j$ is $\langle h_j\rangle$-periodic. 
     In fact, there exists a lattice $\Lambda\subset \Z^2$ such that for every $x\in\Z^2$ there is $1\leq j\leq m$ such that the set  $(\Lambda+x)\cap T$ is $\langle h_j\rangle$-periodic.
  \end{theorem}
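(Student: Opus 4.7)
Since this statement is recalled from \cite[Theorem~1.4]{GT}, the plan is to reconstruct the Greenfeld--Tao approach rather than to invent a fundamentally new argument. The starting point is to rewrite the tiling identity $F \oplus T = \Z^2$ as the convolution equation $\1_F \ast \1_T = \1_{\Z^2}$ (interpreted pointwise on $\Z^2$). The first step I would carry out is a \emph{dilation lemma}: for each integer $r$ coprime to a suitable integer $M=M(F)$ (built from the prime factors dividing $|F|$ and the index of certain sublattices of period candidates), the dilate $r\cdot F$ also tiles $\Z^2$ with the same translation set $T$. The standard way to prove this is to pass to a finite quotient $\Z^2/\Lambda_0$ for a well-chosen $\Lambda_0$, where the $r$-th power map on the group algebra is an automorphism, and to observe that $\1_{rF}$ and $\1_F$ generate the same principal ideal after this identification.

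Next, from the infinite family of identities $\1_{rF}\ast \1_T=1$ obtained as $r$ ranges over a dense subset of the residues coprime to $M$, I would take differences $(\1_F-\1_{rF})\ast \1_T=0$. Each such identity forces $\1_T$ to be constant on the coset structure dictated by the support of $\1_F-\1_{rF}$, i.e.\ it produces a concrete translation symmetry of $T$ along a direction $h_r$ read off from this support. Iterating over finitely many cleverly chosen $r$'s produces a finite collection of candidate period vectors $h_1,\dots,h_m$, and a quantitative pigeonhole (bounding the number of distinct level sets that can appear) gives the bound $m<c(F)$ and the exponential bound $|h_j|\le d^m$.

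The final and hardest step is the \emph{structure theorem} that converts these many overlapping periodicities into a clean partition $T=T_1\sqcup\dots\sqcup T_m$ with $T_j$ being $\langle h_j\rangle$-periodic. I would proceed by choosing a large lattice $\Lambda\subset \Z^2$ compatible with all $h_j$'s (e.g.\ containing each $h_j$), and then analyzing, coset by coset, which subfamily of periodicities is realized on $T\cap (\Lambda+x)$; the combined periodicity identities force each coset-slice to be periodic in one of the directions $h_j$, which yields the refined statement in the last sentence of the theorem. I expect this combinatorial alignment to be the main obstacle: the different periodicities, each valid on a possibly different scale, can a priori interfere, and showing that after restricting to cosets of a single lattice $\Lambda$ each slice picks out a unique direction $h_j$ requires the delicate ``Sudoku-type'' case analysis of \cite{GT}. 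Once that alignment is established, regrouping the cosets by which $h_j$ governs them yields the desired weakly-periodic decomposition.
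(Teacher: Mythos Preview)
The paper's own proof is simply ``See \cite[Theorem~1.4]{GT}'', so there is nothing to compare beyond checking that your sketch reconstructs the Greenfeld--Tao argument; it does, in outline: dilation lemma, the family of convolution identities $(\1_F-\1_{rF})\ast\1_T=0$, and a coset-by-coset structure analysis to extract the weakly-periodic decomposition.

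Two small corrections to your reconstruction. First, the sentence ``each such identity forces $\1_T$ to be constant on the coset structure \dots\ i.e.\ it produces a concrete translation symmetry of $T$ along a direction $h_r$'' is an oversimplification: a single difference identity does not directly yield a period of $T$; rather, the collection of identities forces $\1_T$ to decompose as a finite sum of functions each periodic in one of finitely many directions, and only then does a further combinatorial argument upgrade this to a genuine partition of $T$. Second, the ``Sudoku-type'' label is associated with the counterexample construction in \cite{GT22}, not with the structure theorem in \cite{GT}; the last step in \cite{GT} is closer to a pigeonhole/averaging argument on cosets than to a Sudoku analysis.
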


  \begin{proof}
      See \cite[Theorem~1.4]{GT}.
  \end{proof}

Observe that by combining Theorem~\ref{thm:weakZper} and Lemma~\ref{lem:raster}(ii), we can immediately deduce that any \emph{integer} tiling in $\Tile(\Omega;\R^2)$ is weakly periodic.
  The main barrier to transferring Theorem~\ref{thm:weakper} from its integer counterpart is the possible \emph{sliding} components obstructing periodicity. The proof of Theorem~\ref{thm:weakper} consists of showing that the decomposition of Theorem~\ref{thm:weakZper} is compatible with the sliding decomposition in Lemma~\ref{lem:tilings_slide}, in the sense that each of the singly-periodic pieces in the decomposition \eqref{eq:weakZper} can be further decomposed into pieces, each contained in a sliding component of the partition \eqref{eq:tiling_slide}. In order to show that, we introduce the notion \emph{earthquake\footnote{The terminology of \emph{earthquakes} also appears in \cite{ken}, but there it is used in the context of continuous tilings and the definition (although closely related) is slightly different.} decomposition} of discrete tilings.

\subsection{Earthquake decomposition}

\begin{definition}[Earthquake decomposition of a discrete tiling]
    Let $F\subset N^{-1}\Z^2$ be finite. Suppose $T \in \Tile(F; N^{-1}\Z^2)$, and $\vec v \in N^{-1}\Z^2$ then we define the  \emph{$\vec v$-discrete earthquake decomposition of $T$ by $F$} as a decomposition
   $$T = \bigsqcup_{P \in \quake{T}} P,$$
    where:
    \begin{enumerate}
        \item[(i)] For each set $P\in \quake{T}$ the set $F \oplus P$ is $\mathbb Z\vec v$-periodic.
        \item[(ii)] If $P\in \quake{T}$ and $P'\subset P$ satisfies (i) then $P'\in \{P,\emptyset\}$. 
    \end{enumerate}
    
     The set\footnote{Note that a decomposition satisfying (ii) is the maximal one amongst all decompositions satisfying (i) and therefore always exists and is unique.}  $\quake{T}$ is the set of \emph{tectonic plates} arising from earthquakes in the direction $\vec v$. 
\end{definition}

The following Lemma shows that the discrete decomposition into singly-periodic sets is compatible with a discrete earthquake decomposition.

\begin{lemma}\label{lem:EarthquakeStructure}
    Let $F\subset N^{-1}\Z^2$ be a tile of $N^{-1}\Z^2$,  $T\in\Tile(F;N^{-1}\Z^2)$ and $\quake{T}$ be the $\vec v$-earthquake decomposition of $T$.
    Then there are pairwise incommensurable vectors $h_1,\dots,h_m\in \mathbb{Z}^2\setminus \{0\}$ such that for each $P\in \quake{T}$ there is a decomposition $P=\bigsqcup_{j=1}^m P_j$ such that $P_j$ is $\langle h_j\rangle$-periodic for every  $1\le j\le m$. 
\end{lemma}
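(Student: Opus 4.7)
The plan is to combine the strong form of Theorem~\ref{thm:weakZper} with a coset-by-coset analysis of the earthquake plate decomposition.  After rescaling by $N$, we may assume $F,T,\vec v\subset \Z^2$, and the plates transform equivariantly.  Apply the strong form of Theorem~\ref{thm:weakZper} to $T$ to obtain a full-rank lattice $\Lambda_0\subset \Z^2$ and pairwise incommensurable vectors $h_1,\dots,h_m\in\Z^2\setminus\{0\}$ such that every coset $(\Lambda_0+x)\cap T$ is $\langle h_{j(x)}\rangle$-periodic.  Replace $\Lambda_0$ by a sublattice $\Lambda$ containing each $h_j$, $\vec v$, and $F-F$; this only refines cosets while preserving the coset-wise periodicity, and it arranges that each coset $C=\Lambda+x$ is $\Z\vec v$-invariant and contains $F+t$ for every $t\in T\cap C$, so that $T\cap C$ is a $\langle h_{j(C)}\rangle$-periodic $F$-tiling of $C$.

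For any plate $P\in\quake{T}$ and coset $C$, the set $F\oplus(P\cap C) = (F\oplus P)\cap C$ is $\Z\vec v$-periodic as the intersection of two $\Z\vec v$-periodic sets, and a minimality argument based on the identity $F\oplus(S_1\cap S_2)=(F\oplus S_1)\cap(F\oplus S_2)$ for $S_1,S_2\subset T$ (valid since $T$ is a tiling) identifies $P\cap C$ as a plate of the $\vec v$-earthquake decomposition of $T\cap C$.  Setting $P_j := \bigsqcup_{C\,:\,j(C)=j}(P\cap C)$ will produce the desired decomposition $P=\bigsqcup_{j=1}^m P_j$ with each $P_j$ being $\langle h_j\rangle$-periodic, provided each $P\cap C$ is $\langle h_{j(C)}\rangle$-periodic.

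The main obstacle is establishing this last periodicity: showing that for a $\langle h\rangle$-periodic $F$-tiling $T'$ of a $\Z\vec v$-invariant set, each earthquake plate $P'\in\quake{T'}$ is $\langle h\rangle$-periodic.  Translation by $h$ is a bijection on $T'$ permuting $\quake{T'}$, because $F\oplus(P'+h)=(F\oplus P')+h$ remains $\Z\vec v$-periodic, so the task is to rule out nontrivial permutations.  The key observation is that any period $\lambda$ of $T'$ lying in $\Z\vec v$ automatically fixes every plate setwise, since $P'$ is characterised inside $T'$ as the unique set of $F$-positions tiling its $\Z\vec v$-periodic footprint $F\oplus P'$.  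To exploit this, the $h_j$'s from Theorem~\ref{thm:weakZper} should be chosen---or each $h_j$ replaced by a suitable integer multiple lying in the plate-preserving subgroup---using Newman's theorem~\cite{N} in the quotient by $\langle h\rangle$ to extract sufficient $\Z\vec v$-periodicity of $T'$.  Since there are only finitely many $\Lambda$-cosets, a uniform plate-preserving period can be taken across cosets, yielding the full decomposition.
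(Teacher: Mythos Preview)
Your overall strategy---pass to $\Z^2$, refine the lattice so that each $\Lambda$-coset $C$ is $\Z\vec v$-invariant and carries its own $F$-tiling $T\cap C$, then analyse plates coset by coset---is sound and in fact cleaner than the paper's argument, which instead slides all plates but one to produce a family $T^k$ of tilings and runs a pigeonhole on the periodicity directions $h_{j(k,\Gamma)}$.  Your minimality argument, carried to its conclusion, actually shows more than you state: since $F\oplus(P\cap C)=(F\oplus P)\cap C$ is $\Z\vec v$-periodic, condition~(ii) forces $P\cap C\in\{P,\emptyset\}$, so every plate $P$ lies in a \emph{single} coset $C_P$, and your proposed decomposition $P_j=\bigsqcup_{j(C)=j}(P\cap C)$ is trivially $P_{j(C_P)}=P$.

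This is where the gap is: your target claim, that the plate $P$ is $\langle h_{j(C_P)}\rangle$-periodic, is false in general.  Take $F=\{0\}$, $T=\Z^2$, $\vec v=(1,0)$: the plates are the horizontal rows $\Z\times\{n\}$, and $T$ is $\langle(0,1)\rangle$-periodic, yet no row is.  Your proposed repair---replace $h_j$ by an integer multiple lying in the plate-preserving subgroup---cannot work, because that subgroup is contained in $\Z\vec v$ (a period $\lambda$ fixes the plate $P'$ iff $F\oplus P'$ is $\lambda$-invariant, and you only know $\Z\vec v$-invariance), so it contains no nonzero multiple of any $h_j\nparallel\vec v$.

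The ingredients you have assembled do, however, prove the lemma once you aim for the right conclusion.  From the $\langle h_{j(C)}\rangle$-periodicity of $T\cap C$ and the Newman/pigeonhole argument in $C/\langle h_{j(C)}\rangle\cong\Z\times(\Z/d\Z)$, the tiling $T\cap C$ is doubly periodic; its full-rank period lattice therefore contains some $k_C\vec v$.  Your own ``key observation'' then gives $P+k_C\vec v=P$ for every plate $P\subset C$.  Taking $K=\operatorname{lcm}_C k_C$ over the finitely many cosets yields that \emph{every} plate is $\langle K\vec v\rangle$-periodic, so the lemma holds with $m=1$ and $h_1=K\vec v$---a conclusion slightly stronger than what the paper obtains via its shifting-and-pigeonhole route.
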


\begin{proof} Let $\tilde F\coloneqq NF$. Then, as $T\in\Tile(F;N^{-1}\Z^2)$ if and only if $NT\in \Tile(\tilde F;\Z^2)$, it suffices to prove the statement for any $T\in \Tile(\tilde F;\Z^2)$.
    By translating, we may assume without loss of generality that $\tilde F$ contains the origin.  
 Fix some $P\in \quake T$, and define $T^k \in \Tile(\tilde F,\mathbb Z^2)$ as $$T^k \coloneqq P\sqcup \bigsqcup_{P'\in \quake T \setminus \{P\}} P' + k \vec v.$$
    Note that $T^k$ is obtained from $T$ by an \emph{earthquake} that shifts all the plates but $P$ by $k \vec v$, and that $P \in \quake{T^k}$ for all $k$ by the construction of $T^k$.

By Theorem~\ref{thm:weakZper}, there is $M\in\N$ and pairwise incommensurable vectors $ \tilde h_1,\dots,\tilde h_{m}\in M\Z^2$ with the property that for every coset $\Gamma=(x+M\Z^2)\in \mathbb{Z}^2/M\Z^2$ there is $1\leq j=j(k,\Gamma) \leq m$ such that $\Gamma \cap T^k$ is $\langle \tilde  h_j\rangle$-periodic. By adding a vector (if needed) we can assume without loss of generality that $\tilde h_1$ is parallel to $\vec v$, and by enlarging $\tilde h_1$ (if needed) we can further assume that $\tilde h_1=\ell M\vec v$ for some $\ell\in\N$.
    
Let $\Gamma=(x+ M\Z^2)\in \Z^2/\Lambda$. For each $k \in \Z$, let $h(k,\Gamma)\coloneqq \tilde h_{j(k,\Gamma)}$. 
By the pigeonhole principle, there exist $0\le k<k'\le m$ such that $h(M k,\Gamma) = h(M k',\Gamma)$. We let $0\le k_\Gamma \le m-1$  be the smallest such that there exists $k_\Gamma < k' \le m$ with $h(M k_\Gamma,\Gamma) = h(M k',\Gamma)$ and set $k_\Gamma < k'_\Gamma \le m$ to be the smallest among all possible  $k'$ with this property. Let $h(\Gamma)\coloneqq h(Mk_\Gamma,\Gamma)$.

For each $j = 2, \dots m$, let $\mathcal{P}_j\subset P$ be the set of points $x$ in $P$ such that 
  $h(x+M\Z^2) = \tilde h_j$, and let $P_j\subset \mathcal{P}_j$ be the set of points $x$ in $\mathcal{P}_j$ such that 
  the coset $x + \tilde h_j \Z$ is entirely contained in $P$. 
   Clearly, for each $2\leq j\leq m$, the set $P_j$ is $\langle \tilde h_j\rangle$-periodic.

We will show that there exists $n\in\N$, independent of the choice of $P\in\quake{T}$, such that each of the sets $\tilde P_j\coloneq \mathcal{P}_j\setminus P_j$, $j=2,\dots,m$, is $\langle n\tilde h_1\rangle$-periodic. Then, the result will follow by setting $h_1\coloneqq n\tilde h_1$, $h_j\coloneqq \tilde h_j$ for $j=2,\dots,m$, and $$P_1 \coloneqq P\setminus \left(\sqcup_{j=2}^m P_j\right) =\{x\in P\colon h(x+M\Z^2)=\tilde h_1\} \cup \left( \sqcup_{j=2}^m \tilde P_j\right),$$ which is $\langle h_1\rangle$-periodic. 
It thus suffices to show  for each $j=2,\dots,m$  that the set  $\tilde P_j$ is $\langle n\ell M\vec v \rangle$-periodic with $n=\prod_{\Gamma\in\Z^2/M\Z^2}( k_\Gamma-k'_\Gamma)$.

Let $2\leq j\leq m$. Suppose that $x\in  \tilde P_j$ and let $\Gamma\coloneqq x+M\Z^2$. Then, by the definition of $\tilde P_j$,  there is $k\in\Z$ such that the point $y\coloneqq x+ k \tilde h_j\in \Gamma$ is in $T\setminus P$. Observe that as $\tilde h_j\in M\Z^2$ we have $\Gamma=x+M\Z^2=y+M\Z^2$. Thus, as by the definition of $(k_\Gamma,k_\Gamma')$ both $T^{Mk_\Gamma}\cap\Gamma$ and $T^{Mk_\Gamma'}\cap\Gamma$ are $\langle \tilde h_j\rangle$-periodic and $x\in T^{Mk_\Gamma}\cap T^{Mk_\Gamma'}\cap \Gamma$, we have that $$y=x+ k \tilde h_j \in T^{Mk_\Gamma}\cap T^{Mk_\Gamma'}\cap \Gamma.$$ Therefore, the point $z\coloneqq y-M k_{\Gamma}\vec v$ is in $T\cap \Gamma$. In turn, we obtain that  the point $$z'\coloneqq  z+Mk'_\Gamma\vec v= y-M(k_\Gamma-k_\Gamma')\vec v$$ is in $T^{Mk'_{\Gamma}}\cap \Gamma$; and so, by $\langle \tilde h_j\rangle$-periodicity of $T^{Mk'_{\Gamma}}\cap \Gamma$, we have that the point $$x'\coloneqq z'-k \tilde h_j = x-M(k_\Gamma-k_\Gamma')\vec v$$ is also in $T^{Mk'_{\Gamma}}\cap \Gamma$. 
Observe that since $x\in \tilde P_j\subset P$ and $P\in\quake{T}$, the point $x'$ is in $P$, and so, as $x'\in \Gamma$, we have that $x'\in\mathcal{P}_j$.
Moreover, since $y\in T\setminus P$ we have $y\not\in F+P$, and so, by the $\vec v$-periodicity of $F+P$, we obtain that neither $z$ nor $z'$  is in $F+P$ and, in particular, $z'=x'+k\tilde h_j \not\in P$. This shows that $x'=x-M(k_\Gamma-k'_\Gamma)\vec v$ is in $\tilde P_j$. 
Similarly, one can show that the point $x+M(k_\Gamma-k'_\Gamma)\vec v$ is in $\tilde P_j$, by simply repeating the above argument with the roles of $k_\Gamma$ and $k_\Gamma'$ being swapped. 
We therefore conclude that $\tilde P_j$ is $\langle n\tilde h_1\rangle$-periodic with $n=\prod_{\Gamma\in\Z^2/M\Z^2}(k_\Gamma-k'_\Gamma)$. 
\end{proof}

\subsection{}
Observe that using Lemma~\ref{lem:EarthquakeStructure}, to complete the proof of Theorem~\ref{thm:weakper}, it only remains to show that the discrete earthquake decomposition in Lemma~\ref{lem:EarthquakeStructure} is compatible with the continuous earthquake structure in $\Omega$.

\begin{proof}[Proof of Theorem~\ref{thm:weakper}]
Let $\Omega$ be a rational polygonal tile, by translation invariance, we can assume without loss of generality that $0\in V(\Omega)$. 
Let $T\in\Tile(\Omega;\R^2)$. By Lemma~\ref{lem:tilings_slide}, there exists a direction $\vec v\in \mathbb R^2\setminus \{0\}$ such that  $T$ can be decomposed as $T = \bigsqcup_{i\in I} Q_i$, where for each $i\in I$ the set $\tilde \Omega\oplus Q_i$ is $\vec v\R$-invariant.  By invariance under affine transformations, we may assume that $\vec v=(0,1)$.

As $V(\Omega)\subset \Q^2$ is finite, there exists $n\in \N$ such that $V(n\Omega)\subset \Z^2$. Let $\tilde \Omega \coloneqq n\Omega$. Note that $T\in \Tile(\Omega;\R^2)$ if and only if $nT \in \Tile(\tilde \Omega;\R^2)$ and $T$ is weakly-periodic if and only if $nT$ is weakly-periodic. 
Thus, we can equivalently prove the statement for $\tilde \Omega$.

Let $T'\in\Tile(\tilde \Omega; \R^2)$ be the tiling that arises from merging the $\sim_{\tilde \Omega}$ components of $T$ by sliding, as described in Lemma~\ref{lem:onecomp}.
Then, by construction, there are coefficients $s_i\in \mathbb{R}$ such that $$ T' = \bigsqcup_{i\in I} Q_i - s_i (0,1),$$ 
and by Lemma~\ref{lem:tile+vertex=>tiling} $T'$ is a subset of $\Z^2$, i.e., $T$ is an integer tiling by $\tilde \Omega$.

Let $F=F(\tilde \Omega)$ be as in \eqref{eq:F}. Then, by Lemma~\ref{lem:raster}(ii) $ T'\in \Tile(F;N^{-1}\Z^2)$, and by Lemma~\ref{lem:raster}(iii), if for a subset $S$ of $T'$ the set $\tilde \Omega \oplus S$ is $(0,1)\mathbb R$-invariant, then $F\oplus S$ is $(0,1)\Z$-invariant. Therefore, the decomposition of $T'$ into $(0,1)$-discrete earthquakes by $F$ is a refinement of the decomposition of $T'$ into continuous earthquakes by $\tilde \Omega$. In other words, for $i\in I$ there exist sets $P_{i,j}\subset \Z^2$, $j\in J_i$ such that $Q_i - s_i(0,1)= \bigsqcup_{j\in J_i}  P_{i,j} $ and the $(0,1)$-earthquake decomposition of $T'$ by $F$ is $ T' = \bigsqcup_{i,j}  P_{i,j}$.

 Thus, by Lemma~\ref{lem:EarthquakeStructure}, for each $i\in I$, $j\in J_i$ there exists $h_{i,j}\in \Z^2$ such that the set $P_{i,j}$ is $\langle h_{i,j}\rangle$-periodic, and the vectors $h_{i,j}$ take finitely many possible values in $\Z^2$. Therefore, each $Q_i-s_i(0,1)=\bigsqcup_{j\in J_i}  P_{i,j}$ is weakly-periodic. By translation invariance, this implies that each $Q_i$ is weakly-periodic with the same periods. Therefore, as $T = \bigsqcup_{i\in I} Q_i$, the tiling $T$ is weakly-periodic. We thus have Theorem~\ref{thm:weakper}.
\end{proof}


\begin{thebibliography}{999999}


\bibitem[B20]{BH}
S. Bhattacharya,  \emph{Periodicity and Decidability of Tilings of $\Z^2$}, Amer. J. Math., \textbf{142}, (2020),  255--266.

\bibitem[GN91]{gbn}
D. Girault-Beauquier, M. Nivat, \emph{Tiling the plane with one tile}, Topology and category theory in computer science (Oxford, 1989), 291–333, Oxford Sci. Publ., Oxford Univ. Press, New York, 1991.

\bibitem[GGRT23]{ggrt}
J. Greb\'ik, R. Greenfeld, V. Rozho\v{n}, T. Tao, \emph{Measurable tilings by abelian group actions}, \href{https://doi.org/10.1093/imrn/rnad048}{\emph{International Mathematics Research Notices}} (2023).


\bibitem[GK23]{GK}
R. Greenfeld, M. Kolountzakis, 
\emph{Tiling, spectrality and aperiodicity of connected sets},  \href{https://arxiv.org/abs/2305.14028}{https://arxiv.org/abs/2305.14028}, to appear in Israel Journal of Mathematics.


 \bibitem[GT20]{GT}
R. Greenfeld, T. Tao, 
\emph{The structure of translational tilings in $\Z^d$}, Discrete Analysis (2021):16, 28 pp.
	
      

\bibitem[GT22]{GT22}
R. Greenfeld, T. Tao, 
\emph{A counterexample to the periodic tiling conjecture}, Annals of Mathematics, Vol 200, Issue 1, 301--363, 2024.

\bibitem[GT23]{GT23}
R. Greenfeld, T. Tao, 
\emph{Undecidability of translational monotilings}, \href{https://arxiv.org/abs/2309.09504}{https://arxiv.org/abs/2309.09504}.

\bibitem[GS87]{grunbaum-shephard}
B. Gr\"unbaum, G.C. Shephard, Tilings and Patterns. W.H. Freeman, 1987.

\bibitem[K92]{ken}
R. Kenyon, \emph{Rigidity of planar tilings}, Invent. Math., \textbf{107} (1992), 637--651.

\bibitem[K93]{err}
R. Kenyon, \emph{Erratum: ``Rigidity of planar tilings''}, Invent. Math., \textbf{112} (1993), 223. 


    		 \bibitem[LW96]{LW}
    J. C. Lagarias, Y. Wang, \emph{Tiling the line with translates of one tile}, Invent. Math. \textbf{124} (1996), no. 1-3, 341--365.

      \bibitem[M80]{M}
		P. McMullen, \emph{Convex bodies which tile space by translations}, Mathematika \textbf{27} (1980), 113--121.

  \bibitem[N77]{N}
D. J. Newman, \emph{Tesselation of integers}, J. Number Theory \textbf{9} (1977), no. 1, 107--111.

   \bibitem[MSS22]{bgu}   T. Meyerovitch, S.  Sanadhya, Y. Solomon,    \emph{A note on reduction of tiling problems}, \href{https://arxiv.org/abs/2211.07140}{https://arxiv.org/abs/2211.07140}.

    \bibitem[S74]{stein}
S. Stein, \emph{Algebraic tiling}, Amer. Math. Monthly, \textbf{81} (1974), 445--462.

\bibitem[V54]{V}
		B. A. Venkov, \emph{On a class of Euclidean polyhedra}, Vestnik Leningrad Univ. Ser. Math. Fiz. Him. \textbf{9} (1954), 11--31.

  \bibitem[W]{wang}
H. Wang, \emph{Notes on a class of tiling problems}, Fundamenta Mathematicae, \textbf{82} (1975), 295--305.
    
\end{thebibliography}
\end{document}